\documentclass[a4paper]{amsart}
\usepackage{amsfonts,amssymb}
\usepackage[english]{babel}
\sloppy

\newcommand{\hrk}{\mathop{\mathrm{hrk}}}
\newcommand{\Tor}{\mathrm{Tor}}
\newcommand{\id}{\mathrm{id}}
\newcommand{\depth}{\mathrm{depth}}
\newcommand{\pr}{\mathrm{pr}}
\newcommand{\Spec}{\mathrm{Spec}}

\newcommand{\Zs}{\mathcal{Z}}

\newcommand{\pdim}{\mathrm{pd}}
\renewcommand{\hom}{\mathop{\mathrm{Hom}}}
\renewcommand{\dim}{\mathop{\mathrm{dim}}}
\renewcommand{\phi}{\varphi}
\renewcommand{\ge}{\geqslant}
\renewcommand{\le}{\leqslant}
\newcommand{\bideg}{\mathop{\mathrm{bideg}}}

\voffset=-0.3cm
\textheight=22.5cm

\def\C{\mathbb C}

\def\Q{\mathbb Q}

\def\Z{\mathbb Z}

\def\sA{\mathcal A}
\def\k{\Bbbk}

\newtheorem{theorem}{Theorem}[section]
\newtheorem{lemma}[theorem]{Lemma}
\newtheorem{proposition}[theorem]{Proposition}

\newtheorem*{conjecture}{Conjecture}
\newtheorem{corollary}[theorem]{Corollary}
\theoremstyle{definition}
\newtheorem{definition}[theorem]{Definition}

\theoremstyle{remark}
\newtheorem*{remark}{Remark}

\title{On almost free torus actions and Horrocks conjecture.}
\author{Yury Ustionovsky}
\address{Steklov Mathematical Institute, Russian Academy of Sciences}
\thanks{The author was supported by a grant from Dmitri Zimin's `Dynasty' foundation
and grants 5413.2010.1, 2253.2011.1}
\email{yuraust@gmail.com}

\begin{document}

\begin{abstract}
We construct a model for cohomology of a space $X$ equipped with a torus $T$ action, whose homotopy orbit space $X_{T}$ is formal. This model represents Koszul complex of its equivariant cohomology. Studying homological properties of modules over polynomial ring we derive new bounds on homological rank (dimension of cohomology ring) of $X$ equipped with almost free torus action. We give a proof of toral rank conjecture for spaces with formal quotient in the case of torus dimension~$\le5$. 
\end{abstract}

\maketitle

\section*{Introduction}

Study of group actions on various topological spaces is a classical and extensive field of algebraic topology. In the last decade particular classes of spaces with torus action are of a special interest due to various applications in algebraic geometry, symplectic topology, commutative algebra etc. In present paper we study correlation of two famous conjectures arising in equivariant topology and homological algebra. Namely, Halperin's toral rank conjecture and Buchsbaum-Eisenbud-Horrocks conjecture on dimensions of syzygies of certain modules over polynomial ring $S(m)$. 

First part of this paper is devoted to general toral rank conjecture. It was formulated by Halperin~\cite{halp} for torus action~$T^m$. Conjecture gives a lower bound on homological rank of cohomology ring of a space with almost free torus action (recall, that an action of group $G$ on a space $X$ is \emph{almost free} if stabilizer $G_x\subset G$ of any point $x\in X$ is finite.)

\begin{conjecture}[Toral rank conjecture]
Let $X$ be a finite-dimensional $CW$ complex with almost free $T^m$ action, then
\[
\hrk(X,\Q):=\sum_{i\ge 0} \dim H^i(X,\Q)\ge 2^m.
\] 
\end{conjecture}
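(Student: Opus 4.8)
The plan is to translate the almost free action into a statement about a finite-length module over the polynomial ring and then invoke a Horrocks-type lower bound on its total Betti number. First I would form the Borel construction $X_{T}=ET\times_{T}X$ and consider the bundle $X\hookrightarrow X_{T}\xrightarrow{p}BT$, so that the equivariant cohomology $M:=H^{*}_{T}(X;\Q)=H^{*}(X_{T};\Q)$ becomes a graded module over $S(m)=H^{*}(BT;\Q)=\Q[t_{1},\dots,t_{m}]$ with $\deg t_{i}=2$. Because the action is almost free and $X$ is finite-dimensional, the map $X_{T}\to X/T$ is a rational homology equivalence onto a finite-dimensional space; hence $M$ is finite-dimensional over $\Q$, i.e.\ a nonzero finite-length $S(m)$-module supported at the single point $\mathfrak m=(t_{1},\dots,t_{m})$, so that its support has codimension $m$.

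The second step recovers $H^{*}(X;\Q)$ from $M$. Viewing $X$ as the homotopy fibre of $p$, the Eilenberg--Moore spectral sequence of this fibration has the form
\[
E_{2}=\Tor_{S(m)}\bigl(M,\Q\bigr)\ \Longrightarrow\ H^{*}(X;\Q).
\]
Here the formality of $X_{T}$ --- the hypothesis highlighted in the abstract and encoded in the model this paper constructs --- makes the spectral sequence collapse, yielding a graded isomorphism $H^{*}(X;\Q)\cong\Tor_{S(m)}(M,\Q)$. Concretely, $\Tor_{S(m)}(M,\Q)$ is the homology of the Koszul complex $(M\otimes\Lambda[u_{1},\dots,u_{m}],d)$ on $m$ exterior generators dual to the $t_{i}$, which is exactly the cochain model alluded to in the abstract. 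Consequently
\[
\hrk(X,\Q)=\dim_{\Q}\Tor_{S(m)}\bigl(M,\Q\bigr)=\sum_{i\ge 0}\dim_{\Q}\Tor^{S(m)}_{i}(M,\Q),
\]
and the conjecture reduces to the purely algebraic claim that every nonzero finite-length module over $S(m)$ has total Betti number at least $2^{m}$.

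This last inequality is precisely the total-rank form of the Buchsbaum--Eisenbud--Horrocks conjecture in codimension $m$. Since $M$ has finite length, the Auslander--Buchsbaum formula gives $\pdim M=m$, so its minimal free resolution has length $m$; one expects the individual ranks to satisfy $\dim_{\Q}\Tor^{S(m)}_{i}(M,\Q)\ge\binom{m}{i}$, which already sums to $2^{m}$. I would attack this by estimating the ranks of the successive syzygy modules in the minimal resolution of $M$, a program that can be carried out unconditionally when the number of variables $m$ is small.

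The hard part is twofold, and it is what keeps the full conjecture open. First, the Horrocks bound itself is not known in general, so the algebraic step can only be completed unconditionally in low torus rank --- the range $m\le5$ of the abstract --- where the syzygy estimates are explicit. Second, and more fundamentally, the collapse in the middle step genuinely uses formality: in its absence the Eilenberg--Moore spectral sequence yields only $\hrk(X,\Q)\le\dim_{\Q}\Tor_{S(m)}(M,\Q)$, the wrong direction, since higher differentials may destroy dimension and sever the link to Horrocks. Controlling or excluding these differentials without a formality hypothesis is the essential obstruction, and is why the argument is restricted to spaces with formal quotient.
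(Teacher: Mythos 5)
Your proposal follows the same overall strategy as the paper: both reduce the conjecture, for actions with formal quotient, to the weak Horrocks conjecture, by identifying $H^*(X,\Q)$ with the Koszul homology $\Tor_{S(m)}^*(M,\Q)$ of the finite-length module $M=H^*_{T^m}(X,\Q)\cong H^*(X/T^m,\Q)$, and both then appeal to Horrocks-type lower bounds in low rank. The technical route to the key isomorphism differs, though. You run the Eilenberg--Moore spectral sequence of the Borel fibration $X\to X_{T^m}\to BT^m$ and collapse it at $E_2$ using formality; the paper instead works with the principal bundle $T^m\to X\times ET^m\to X_{T^m}$, identifies $d_2$ of its Leray--Serre spectral sequence with the characteristic class, and proves collapse at $E_3$ by a relative minimal model argument (extending the quasi-isomorphism $\phi\colon\mathcal M(X_{T^m})\to H^*(X_{T^m},\Q)$ across $\mathcal M(X_{T^m})\otimes\Lambda(u_1,\dots,u_m)$). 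Indeed the paper explicitly contrasts its choice with Puppe's use of the Borel fibration, which is the one you chose; the two routes land on the same Koszul complex. One point your route leaves implicit: collapse of the Eilenberg--Moore spectral sequence needs formality of the \emph{map} $X_{T^m}\to BT^m$, not merely of the space $X_{T^m}$. This does hold here --- since $H^*(BT^m,\Q)$ is a free CDGA with zero differential, any two maps out of it inducing the same map on cohomology are homotopic --- but the justification should be stated, as formality of source and target does not in general imply formality of a map.

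The genuinely thin spot is the algebraic endgame. You say the needed syzygy estimates ``can be carried out unconditionally when $m$ is small,'' but the elementary estimates you sketch (bounds on the number of generators and on the top and bottom $\Tor$'s, combined with vanishing of the Euler characteristic) are exactly what the paper proves, and they yield only $\dim\Tor^*_{S(m)}(M,\Q)\ge 5m-4$ (even $m$) and $\ge 3m-1$ (odd $m$), which reaches $2^m$ only for $m\le 4$. The case $m=5$ is \emph{not} accessible this way: the paper closes it by citing the Avramov--Buchweitz bound $\dim\Tor^*_{S(m)}(M,\Q)\ge\frac32(m-1)^2+8$ (equal to $32=2^5$ at $m=5$), which rests on the Evans--Griffith Syzygy Theorem and is far from an explicit low-rank syzygy count. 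Without invoking such a result, your program as described stops at $m\le 4$.
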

Analogous conjecture for free $(\Z/p\Z)^m$-actions on a products of odd dimensional spheres was stated by Conner in 1957~\cite{co57}. Now (see~\cite{pu}) general conjecture is proved for $m\le3$. However there are many families of manifolds for which this conjecture is known. For instance it is proved~\cite[7.3.3]{f-o-t08} for homogeneous spaces of compact Lie groups, for symplectic actions on symplectic manifolds, for manifolds satisfying hard Lefschetz properties, etc. 

\smallskip
In Section 1 we apply the Leray-Serre spectral sequence of fibration $\pi\colon X\times ET^m\to X_{T^m}$ to construct a convenient model (i.e. chain complex) of cohomology ring $H^*(X,\Q)$ of a space with $T^m$-action, whose homotopy orbit space $X_{T^m}$ is formal. It turns out, that in the case of almost free action on finite $CW$-complex this model inherit some special algebraic properties.   

Objects arising in the first part are closely related to a well known Buchsbaum-Eisenbud-Horrocks conjecture~\cite[prob.~24]{ha79}. This conjecture was generalized many times since it was formulated by Buchsbaum and Eisenbud~\cite[p.~439]{be77}. Following~\cite[prob.~24]{ha79} we call it Horrocks conjecture. As our study is motivated by problems arising in equivariant topology we give the following particular variant of the conjecture.

Let $\Q[v_1,\dots,v_m]$ be a polynomial ring with topological grading $\deg v_i=2$.

\begin{conjecture}[Horrocks conjecture]
Let $M$ be a finite dimensional (over $\Q$) module over polynomial ring $S(m)$, then
\[
\dim \Tor_{S(m)}^{-i}(M,\Q)\ge\binom{m}{i}, \qquad i=0,\dots, m.
\]
\end{conjecture}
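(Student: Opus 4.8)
The plan is to translate the statement into the language of minimal free resolutions and then to exploit the structure of $\Tor$ as a module over the exterior algebra. Since $M$ is finite dimensional over $\Q$ it has finite length, so $\depth M=0$ and Auslander--Buchsbaum gives $\pdim_{S(m)}M=m$. Writing $\beta_i:=\dim\Tor^{-i}_{S(m)}(M,\Q)$ for the $i$-th Betti number of the minimal resolution of $M$, the inequalities for $i>m$ are vacuous, and the two boundary cases are immediate: $\beta_0\ge1=\binom{m}{0}$ because $M\ne0$ is generated by at least one element, and $\beta_m\ge1=\binom{m}{m}$ because $\Tor^{-m}_{S(m)}(M,\Q)\cong(0:_M\mathfrak m)$ is the socle of $M$, which is nonzero for nonzero finite-length $M$. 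All the content therefore lies in the range $0<i<m$.

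First I would cut this range in half using duality. The canonical dual $M^\vee=\mathrm{Ext}^m_{S(m)}(M,S(m))$ is again finite dimensional over $\Q$; dualizing and reindexing the minimal resolution of $M$ shows $\beta_i(M)=\beta_{m-i}(M^\vee)$, and $M\mapsto M^\vee$ is an involution on the class of all such modules. Hence it suffices to prove $\beta_i\ge\binom{m}{i}$ for $i\le m/2$. In the small-dimension regime this already does most of the work: for $m\le3$ only the bound $\beta_1\ge m$ survives, while for $m\le5$ one is left with $\beta_1\ge m$ and $\beta_2\ge\binom{m}{2}$, so the conjecture reduces to controlling the first two Betti numbers, which is presumably where the restriction $m\le5$ of the main theorem originates.

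The central tool for the middle range is the graded module structure on $T:=\Tor^*_{S(m)}(M,\Q)$ over $\Lambda:=\Tor^*_{S(m)}(\Q,\Q)\cong\Lambda^\bullet(\Q^m)$, the exterior algebra with $\dim\Lambda_i=\binom{m}{i}$; concretely $T$ is the Koszul homology $H_*(v_1,\dots,v_m;M)$. The bound $\dim T_i\ge\binom{m}{i}$ would follow at once if $T$ contained a free rank-one $\Lambda$-submodule, and more modestly one can try to show that multiplication by a generic linear form $\theta\in\Lambda_1$ is injective on $T_i$ for $i<m/2$ (a Lefschetz-type statement), forcing $\dim T_i$ to be nondecreasing up to the middle. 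The main obstacle is precisely here: the module structure alone is insufficient, since an arbitrary graded $\Lambda$-module need not satisfy the binomial bounds (e.g.\ a one-dimensional module concentrated in a single degree), and for general $M$ the cyclic submodule $\Lambda\cdot x$ generated by a generator $x\in T_0$ may be killed by the top class $\omega\in\Lambda_m$. To close the gap one must bring in the finer information that $T$ is the homology of the Koszul complex of an honest $S(m)$-module---its $A_\infty$/Massey-product structure, or equivalently a positivity/Euler-characteristic input of the kind used for the total rank conjecture---and controlling this for all $M$ simultaneously is exactly what keeps the conjecture open beyond low dimensions.
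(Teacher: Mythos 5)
You were asked to prove a statement that the paper itself presents only as a \emph{conjecture}: the paper never proves it, it remains open in general, and your proposal honestly stops short of claiming a proof. So the right comparison is between your partial results and the partial results the paper actually establishes. Your boundary cases and duality step essentially reproduce the paper's Lemma~\ref{tor1}: there, $\dim\Tor^{0}_{S(m)}(M,\Q)\ge 1$ is the number of generators, and the cases $i=m-1,m$ are obtained exactly as you suggest, by dualizing the Koszul complex to get $\Tor^{-i}_{S(m)}(M,\Q)\cong\Tor^{-(m-i)}_{S(m)}(M^\star,\Q)$ with $M^\star=\hom_\Q(M,\Q)$ (which, for finite-length modules, is your $\mathrm{Ext}^m$-dual up to a grading shift). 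Your direct socle identification of $\Tor^{-m}$ is a harmless variant of what the paper instead deduces from duality.

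The one genuine gap, relative to what is provable by elementary means, is that you relegate $\beta_1\ge m$ to the ``middle range'' to be handled by a hypothetical Lefschetz-type argument on the $\Lambda$-module structure, whereas this case is accessible and the paper proves it: take a minimal presentation $0\to\mathcal I\to\bigoplus_{i=1}^{k}F_i\to M\to 0$ and project the generators of $\mathcal I$ to one free summand; this yields a proper homogeneous ideal $\mathcal J\subset S(m)$ with $\dim S(m)/\mathcal J<\infty$, and since the intersection of $s$ hypersurfaces in $\C^m$ has dimension at least $m-s$ while $\Spec\,(S(m)/\mathcal J)\otimes\C$ is a single point, $\mathcal J$ (hence $\mathcal I$) needs at least $m$ generators. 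Combined with your duality this recovers all four inequalities of Lemma~\ref{tor1}. Separately, your guess about where the restriction $m\le 5$ comes from is off: the paper's toral rank corollary for $m\le 5$ is not obtained by proving $\beta_1\ge m$ and $\beta_2\ge\binom{m}{2}$, but from the weak (total-rank) form of the conjecture via the Avramov--Buchweitz bound $\dim\Tor^{*}_{S(m)}(M,\Q)\ge\frac{3}{2}(m-1)^2+8$, which dominates $2^m$ precisely for $m\le 5$; the individual binomial bound $\beta_2\ge\binom{m}{2}$ is proved in the literature only under extra hypotheses (e.g.\ for monomial quotients, by Evans--Griffith, as quoted in Section~3 of the paper).
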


Note, that often the conjecture is formulated not for polynomial ring $\Q[v_1,\dots,v_m]$ but for an arbitrary local ring. There are number of remarkable results concerning this conjecture. For instance in~\cite{eg88} Evans and Griffith proved this conjecture for modules of the form $\k[v_1,\dots, v_m]/I$, where $I$ is some monomial ideal. Under some purely algebraic constraints conjecture was proved in~\cite{er10}.

In~\cite{eg88} authors state (without prove) that Horrocks conjecture holds for $m\le 4$. In Section 2 we discuss in detail relation with between toral rank conjecture. We give simple proof of toral rank conjecture for the spaces with formal quotients, improving the result of V.\,Puppe~\cite{pu} for them. A stronger result concerning total rank of $\Tor$-module was proved by Avramov and Buchweitz in~\cite{a-b}. They used Evans and Griffith's Syzygy Theorem~\cite{e-g} to prove that $\dim\Tor_{S(m)}^*(M,\Q)\ge \cfrac{3}{2}(m-1)^2+8$. According  to Proposition~\ref{horr-trk} the same quadratic bound holds for the spaces with almost free torus actions, whose orbit space is formal, in particular it implies toral rank conjecture  for $m\le 5$.

In Section 3 we illustrate both conjectures with several examples arising in toric topology and combinatorial commutative algebra. Namely, we check whether conjectures hold for moment-angle-complexes and Stanley-Risner rings. As a corollary we obtain purely combinatorial interpretation of proved inequalities in terms of bigraded Betti numbers $\beta^{-i,2j}(K)$ (see~\cite{bu-pa04})~--- important invariants of underlying simplicial complexes. Finally we formulate a question about cohomology of an arbitrary space with torus action.  

\section*{Acknowledgments}
The author is grateful to Taras Panov for suggesting the problem and constant attention to the research and to Mikiya Masuda for fruitful discussions. Also author would like to thank Stephen Halperin for pointing out a mistake in the original version of the paper.

\section{Principle $T^m$-bundles}
Let $X$ be a $CW$ complex with an action of $m$-dimensional torus $T^m=(S^1)^m$. Our first goal is to construct a convenient differential algebra calculating $H^*(X,\Q)$.

\begin{lemma}\label{tm bundle lemma}
The data $(X\times ET^m,X_{T^m},T^m,\pi)$, where $X_{T^m}=X\times_{T^m} ET^m$ is a Borel construction (homotopy orbit space), $\pi\colon X\times ET^m\to X_{T^m}$ is a projection on orbit space, is locally trivial, homologically simple fibre bundle.
\end{lemma}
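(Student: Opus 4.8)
The plan is to establish three properties of the bundle $(X\times ET^m, X_{T^m}, T^m, \pi)$ separately: that it is locally trivial, that it is a principal $T^m$-bundle (or at least a genuine fibre bundle), and that it is homologically simple. The underlying structure is the standard Borel construction, so I expect most of the work to be assembling well-known facts about the universal bundle $ET^m$ rather than proving anything genuinely new.

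First I would recall that $ET^m \to BT^m$ is a universal principal $T^m$-bundle, so $T^m$ acts freely on $ET^m$, and hence the diagonal action of $T^m$ on $X \times ET^m$ is also free (freeness on the second factor alone forces freeness on the product, regardless of how $T^m$ acts on $X$). This is the key point that makes the quotient well-behaved even when the action on $X$ itself is only almost free: the stabilizer of $(x,e)$ injects into the stabilizer of $e$, which is trivial. Thus $\pi\colon X\times ET^m \to (X\times ET^m)/T^m = X_{T^m}$ is the orbit projection of a free $T^m$-action.

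For local triviality, I would argue that $\pi$ is locally trivial with fibre $T^m$ because it is a principal $T^m$-bundle: a free action of a compact Lie group admitting local sections gives a principal bundle, and these local sections exist here since $ET^m$ can be taken to be a (paracompact, e.g.\ CW) model for which the projection $ET^m \to BT^m$ is locally trivial, and this local triviality pulls back to $X\times ET^m \to X_{T^m}$. Concretely, over a trivializing open set $U\subset X_{T^m}$ one obtains a homeomorphism $\pi^{-1}(U)\cong U\times T^m$ compatible with the $T^m$-action, so the data indeed form a locally trivial fibre bundle with structure group $T^m$.

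The remaining, and slightly more delicate, point is \emph{homological simplicity}: the fundamental group $\pi_1(X_{T^m})$ must act trivially on the cohomology $H^*(T^m,\Q)$ of the fibre. The main obstacle is verifying this monodromy condition, but it follows from the fact that the fibre $T^m$ is itself a topological group and the structure group $T^m$ acts on the fibre by translations (left multiplication), which induce the identity on cohomology: a connected topological group acts trivially on its own cohomology, since translation by any group element is homotopic to the identity through the path-connectedness of $T^m$. Hence the associated local system is trivial and the bundle is homologically simple. Assembling these three observations — freeness of the diagonal action, local triviality inherited from the universal bundle, and triviality of the monodromy — completes the proof.
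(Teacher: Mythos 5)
Your proof is correct, and on the one genuinely delicate point --- homological simplicity --- it takes a different route from the paper. On the principal-bundle part you and the paper agree in substance: the paper just remarks that the orbit projection of a free action of a compact group is automatically a principal $T^m$-bundle, whereas you additionally spell out why the diagonal action is free (the stabilizer of $(x,e)$ injects into the trivial stabilizer of $e$) and why local triviality holds (the bundle is the pullback of $ET^m\to BT^m$ along the classifying map $X_{T^m}\to BT^m$); both details are glossed over in the paper, so this is a welcome expansion rather than a divergence. The real difference is the monodromy argument. The paper pulls the bundle back along an arbitrary loop $\gamma\colon S^1\to X_{T^m}$ and notes that $\gamma^*\pi$ is trivial because principal $T^m$-bundles over $S^1$ are classified by $H^2(S^1,\Z^m)=0$, so the action of $[\gamma]$ on $H^*(T^m,\Q)$ is trivial. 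You instead use that the monodromy of a fibre bundle acts on the fibre through its structure group, and that in a principal bundle the structure group acts on the fibre $T^m$ by translations, each of which is homotopic to the identity because $T^m$ is path-connected. Your argument is the more general one: it shows that \emph{any} fibre bundle with path-connected structure group is homologically simple, with no appeal to the identification $BT^m=K(\Z^m,2)$ or to characteristic classes, while the paper's version is a quick classification argument special to tori. Both proofs ultimately rest on the same underlying fact --- every principal $T^m$-bundle over a circle is trivial because $\pi_0(T^m)$ is trivial --- so either can stand in for the other in the paper.
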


\begin{proof}
Since $\pi$ is a projection on the orbit space of a free action of a compact group, $(X\times ET^m,X_{T^m},T^m,\pi)$ is automatically principle $T^m$ bundle.

Now lets prove that this bundle is homologically trivial i.e. the action of the fundamental group of $X_{T^m}$ on the cohomology groups of fibres is trivial. Let $\gamma\colon S^1\to X_{T^m}$ is an arbitrary loop. The bundle $\gamma^*\pi$ over $S^1$ is trivial, since it is classified by an element in $H^2(S^1,\Z)=0$, hence action of $[\gamma]\in\pi_1(X_{T^m})$ on $H^*(T^m)$ is trivial.  
\end{proof}

In general this spectral sequence does not collapse at any particular term, however under some additional technical assumptions all differentials $d_i, i\ge 3$ vanish and Lerray-Serre spectral sequence provides explicit convenient model for the cohomology of $X$. Namely, let $Y$ be a simply-connected topological space, $[A^*_{PL}(Y),d]$~--- the commutative differential graded algebra (or simply CDGA) of piecewise linear differential forms on $Y$ (see \cite[2.4.2]{f-o-t08}).
\begin{definition}[{\cite[2.7]{f-o-t08}}]
A simply-connected topological space $Y$ is said to be \emph{formal}, if there is a chain of quasi isomorphism between CDGAs $[A_{PL}(Y),d]$ and $[H^*(Y,\Q),0]$. Equivalently $Y$ is formal, if there is quasi-isomorphism from the minimal model of $Y$ to its cohomology ring with zero differential: $\phi\colon [\mathcal M(Y),d]\to [H^*(Y,\Q),0]$ 
\end{definition}

\begin{definition}[Formality condition]
We say that the action $T^m\colon X$ satisfies \emph{formality condition} (or just \emph{formal}) if the homotopy orbit space $X_{T^m}$ is formal.
\end{definition}

\begin{remark}
The formality of the action $T^m\colon X$ does not imply formality of $X$ itself.
\end{remark}

It is important to note that the family of formal spaces is rather rich: it includes all K{\"a}hlerian manifolds, homogeneous spaces of Lie groups. Product, wedge of formal spaces is formal, connected sum of formal manifolds is formal, retract of a formal space is formal. For detailed discussion see~\cite{f-o-t08}.

\begin{lemma}\label{LSss}
Let $X$ be a space with $T^m$ torus action.  Then
\begin{itemize}
\item differential $d_2$ of Leray-Serre spectral sequence of the fibre bundle $(X\times ET^m,X_{T^m},T^m,\pi)$ is determined by characteristic class $\tau$ of the principal $T^m$ bundle $X\times ET^m\to X_{T^m}$, $\tau\in H^2(X_{T^m},\Z^m)$. 

\item spectral sequence collapses at $E_3$ term, provided the action $T^m\colon X$ satisfies formality condition.
\end{itemize}

\end{lemma}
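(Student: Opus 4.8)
The plan is to analyze the Leray-Serre spectral sequence of the bundle $(X\times ET^m, X_{T^m}, T^m, \pi)$ established in Lemma~\ref{tm bundle lemma}. Since this bundle is homologically simple, the $E_2$ term factors as $E_2^{p,q}=H^p(X_{T^m},\Q)\otimes H^q(T^m,\Q)$. The cohomology $H^*(T^m,\Q)$ is an exterior algebra $\Lambda[u_1,\dots,u_m]$ on generators in degree $1$, so $E_2^{*,*}\cong H^*(X_{T^m},\Q)\otimes\Lambda[u_1,\dots,u_m]$. For the first claim I would identify $d_2$ by its action on the exterior generators $u_i$: the transgression sends each $u_i$ to a degree-$2$ class in $H^2(X_{T^m},\Q)$, and this transgression is precisely the component $\tau_i$ of the characteristic (Euler) class $\tau=(\tau_1,\dots,\tau_m)\in H^2(X_{T^m},\Z^m)$ of the principal $T^m$-bundle. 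Since $d_2$ is a derivation determined by its values on algebra generators, specifying $d_2(u_i)=\tau_i$ and $d_2|_{H^*(X_{T^m})}=0$ determines $d_2$ completely; the transgression-equals-characteristic-class statement for principal torus bundles is standard and follows by reducing to the universal case $ET^m\to BT^m$.

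\begin{proof}[Proof plan for the second claim]
For the collapse at $E_3$ under the formality hypothesis, the strategy is to exploit the minimal model. Formality of $X_{T^m}$ gives a quasi-isomorphism $\phi\colon[\mathcal M(X_{T^m}),d]\to[H^*(X_{T^m},\Q),0]$ from the minimal Sullivan model to its cohomology with zero differential. I would then model the total space $X\times ET^m$ by the relative (pullback) model obtained by adjoining the exterior generators $u_1,\dots,u_m$ dual to the fibre $T^m$, with the twisting differential $d u_i = \tau_i$ recording the bundle, so that the model of the total space is $[\mathcal M(X_{T^m})\otimes\Lambda[u_1,\dots,u_m],D]$. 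Transporting this across $\phi$, the relevant computation of $H^*(X\times ET^m,\Q)$ reduces to the cohomology of the Koszul-type complex $[H^*(X_{T^m},\Q)\otimes\Lambda[u_1,\dots,u_m],D]$ where $D u_i=\tau_i\in H^2(X_{T^m},\Q)$ and $D$ vanishes on $H^*(X_{T^m},\Q)$. This is exactly the $E_2$ page equipped with $d_2$, so the cohomology of this complex already computes $H^*(X\times ET^m,\Q)$; comparing with the spectral sequence, whose $E_3$ term is this same cohomology, forces all higher differentials $d_i$, $i\ge 3$, to vanish.
\end{proof}

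The main obstacle I anticipate is making the transport across the formality quasi-isomorphism fully rigorous: one must justify that the pulled-back (KS-extension) model of the total space can be replaced, up to quasi-isomorphism, by the corresponding extension over $[H^*(X_{T^m},\Q),0]$, and that under this replacement the differential on the exterior generators is carried to the honest cohomology classes $\tau_i$. Concretely, this requires knowing that the twisting of the fibration is captured by the transgression of the $u_i$ and that a relative minimal model over a formal base can be rectified so its differential lands in cohomology; the subtlety is that a priori $d u_i$ in the minimal model is a cocycle in $\mathcal M(X_{T^m})$ rather than a genuine cohomology class, and formality is exactly what licenses pushing it forward to $\tau_i$ with no correction terms. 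Once this rectification is in place, the collapse is formal: since the cohomology of the rectified complex $[H^*(X_{T^m})\otimes\Lambda[u_1,\dots,u_m],d_2]$ already equals $H^*(X\times ET^m,\Q)=E_\infty$ while simultaneously equalling $E_3$, no room remains for nontrivial higher differentials.
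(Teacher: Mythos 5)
Your proposal is correct and follows essentially the same route as the paper: the first claim via naturality with respect to the classifying map to the universal bundle $ET^m\to BT^m$, and the second via the relative (KS-extension) model $\mathcal M(X_{T^m})\otimes\Lambda(u_1,\dots,u_m)$ transported across the formality quasi-isomorphism, then comparing $E_3$ with $E_\infty$. The ``rectification'' step you flag as the main obstacle is exactly what the paper resolves by citing \cite[Prop.~15.15]{f-h-t01} and \cite[Lemma~14.2]{f-h-t01}, which extend the formality quasi-isomorphism to the extension with the adjoined exterior generators.
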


\begin{proof}
By Lemma~\ref{tm bundle lemma} the Leray-Serre spectral sequence $(E_i,d_i)$ of a bundle $T^m\to X\times ET^m\to X_{T^m}$ converges to a cohomology group of $X$. Lets consider this sequence with coefficients in $\Q$.

Since differential $d_2$ satisfies Leibniz rule, it is sufficient to set $d_2$ on the multiplicative generators of $E_2=H^*(T^m,\Q)\otimes H^*(X_{T^m},\Q)$. Is is clear that $d_2|_{E^{*,0}}=0,$ hence we need to determine the value of $d_2$ on $H^1(T^m,\Q)$. It is easy to see that for universal $T^m$~--- bundle $ET^m\to BT^m$ differential $d^2\colon H^1(T^m,\Q)\to H^2(BT^m,\Q)$ is an isomorphism. Morphisms between spectral sequences are natural, therefore $d_2\colon H^1(T^m,\Q)\to H^2(X_{T^m},\Q)$ coincides with $i^*\colon H^2(BT^m,\Q)\to H^2(X_{T^m},\Q)$, where $i\colon X_{T^m}\to BT^m$ is a classifying map. It remains to note, that characteristic class $\tau\in H^2(X_{T^m},\Z^m)$ corresponds to a map $i^*\colon H^2(BT^m,\Z)\to H^2(X_{T^m},\Z).$ Hence $E_3=H\bigl[H^*(T^m,\Q)\otimes H^*(X_{T^m},\Q), d_2\bigr]$.

Now lets prove, that all higher differentials are trivial. Cohomology algebra $H^*(T^m,\Q)$ of a torus $T^m$ is an exterior algebra $\Lambda(m)=\Lambda(u_1,\dots,u_m)$. Let $(\mathcal{M}(X_{T^m}), \partial)$ be a minimal model of $X_{T^m}$ calculating its rational cohomology. Then, according to~\cite[Prop.~15.15]{f-h-t01}, algebra $\mathcal{A}=\mathcal{M}(X_{T^m})\otimes\Lambda(u_1,\dots,u_m)$ is a model for cohomology ring of $X$, where differential $d$ coincides with $\partial$ on $\mathcal{M}(X_{T^m})$ and maps $\Lambda^1(m)$ to elements in $\mathcal{M}(X_{T^m})$ representing characteristic class $\tau\otimes\Q$. Since $X_{T^m}$ is formal, there is quasi-isomorphism $\varphi\colon \mathcal M(X_{T^m})\to H^*(X_{T^m},\Q)$. According to Theorem ~\cite[Lemma~14.2]{f-h-t01} $\varphi$ extends to a quasi-isomorphism:
\[
\varphi\otimes \id\colon \bigl[\mathcal{M}(X_{T^m})\otimes\Lambda(u_1,\dots, u_m), d\bigr]\to \bigl[H^*(X_{T^m},\Q)\otimes\Lambda(u_1,\dots, u_m),d\bigr].
\]
Hence algebra $H\bigl[H^*(X_{T^m},\Q)\otimes \Lambda(u_1,\dots, u_m), d\bigr]$ is isomorphic to the cohomology algebra of $X$. The former is exactly $H\bigl[E_2,d_2\bigr]\simeq E_3$, so Leray-Serre collapses at $E_3$ term.
\end{proof}

\begin{corollary}
Given a space $X$ with a $T^m$-action satisfying formality condition one has: 
\begin{equation}\label{add model1}
H^*(X,\Q) \simeq H\bigl[H^*(X_{T^m},\Q)\otimes H^*(T^m,\Q), d\bigr].
\end{equation}
\end{corollary}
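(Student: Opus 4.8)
The plan is to obtain the corollary as an immediate consequence of Lemma~\ref{LSss}, essentially by assembling its two conclusions. First I would recall the setup: by Lemma~\ref{tm bundle lemma} the Leray--Serre spectral sequence $(E_i, d_i)$ of the bundle $T^m\to X\times ET^m\to X_{T^m}$ converges to $H^*(X,\Q)$, its second page is $E_2 = H^*(X_{T^m},\Q)\otimes H^*(T^m,\Q)$, and by the first bullet of Lemma~\ref{LSss} the differential $d_2$ is given by the characteristic class $\tau\otimes\Q$. Denote this $d_2$ by $d$, so that $E_3 = H\bigl[H^*(X_{T^m},\Q)\otimes H^*(T^m,\Q), d\bigr]$.

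Next I would invoke the formality hypothesis through the second bullet of Lemma~\ref{LSss}: the spectral sequence collapses at the third page, whence $E_\infty = E_3$. By convergence, $E_\infty$ is the graded object associated to the decreasing filtration of $H^*(X,\Q)$ induced by the spectral sequence, so at the level of graded $\Q$-vector spaces the right-hand side of~\eqref{add model1} already computes $H^*(X,\Q)$.

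The only point requiring care --- and the step I would flag as the potential obstacle --- is passing from this associated-graded statement to the honest isomorphism in~\eqref{add model1}, i.e.\ checking that the filtration carries no hidden extensions. This, however, is precisely what the proof of Lemma~\ref{LSss} already establishes: the CDGA $\mathcal{A} = \mathcal{M}(X_{T^m})\otimes\Lambda(u_1,\dots,u_m)$ is a genuine model of $H^*(X,\Q)$ by \cite[Prop.~15.15]{f-h-t01}, and the quasi-isomorphism $\varphi\otimes\id$ identifies its cohomology with $H\bigl[H^*(X_{T^m},\Q)\otimes\Lambda(u_1,\dots,u_m), d\bigr]$ on the nose, as algebras. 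Since $\Lambda(u_1,\dots,u_m) = H^*(T^m,\Q)$, this is exactly~\eqref{add model1}, and no further computation is needed.
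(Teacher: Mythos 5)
Your proposal is correct and follows essentially the same route as the paper: the paper treats this corollary as an immediate consequence of Lemma~\ref{LSss}, whose proof already contains the quasi-isomorphism $\varphi\otimes\id$ identifying $H\bigl[H^*(X_{T^m},\Q)\otimes\Lambda(u_1,\dots,u_m),d\bigr]$ with $H^*(X,\Q)$ as algebras. Your instinct to flag the extension problem for the spectral-sequence filtration is sound, and your resolution --- bypassing it via the CDGA model argument rather than arguing through $E_\infty$ --- is exactly what the paper's proof of the lemma accomplishes.
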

%
%

If $X$ is a finite $CW$ complex and $T^m$-action is almost free, cohomology ring of $X_{T^m}$ in~(\ref{add model1}) could replaced with a cohomology ring of an orbit space $X/T^m$: 

\begin{proposition}
Let $X$ be a finite $CW$ complex equipped with almost free $T^m$-action. Then one has:
\begin{equation}\label{add model}
H^*(X,\Q) \simeq H\bigl[H^*(X/T^m,\Q)\otimes H^*(T^m,\Q), d\bigr].
\end{equation}
\end{proposition}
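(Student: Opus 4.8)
The plan is to deduce~(\ref{add model}) from~(\ref{add model1}) by replacing the Borel construction $X_{T^m}$ with the orbit space $X/T^m$ everywhere. Since~(\ref{add model1}) presents the cohomology of $X$ purely in terms of the ring $H^*(X_{T^m},\Q)$ together with the characteristic class $\tau$, it suffices to produce a ring isomorphism $H^*(X/T^m,\Q)\simeq H^*(X_{T^m},\Q)$ carrying these classes into one another. The natural candidate is the projection
\[
p\colon X_{T^m}=X\times_{T^m}ET^m\longrightarrow X/T^m,\qquad [x,e]\mapsto [x],
\]
induced by $ET^m\to\mathrm{pt}$, and I would show that $p^*$ is an isomorphism on rational cohomology.

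First I would identify the fibres of $p$. Over $[x]\in X/T^m$ the orbit is $T^m\cdot x\cong T^m/G_x$, where the stabiliser $G_x$ is finite precisely because the action is almost free, so
\[
p^{-1}([x])=\bigl(T^m/G_x\times ET^m\bigr)/T^m\cong ET^m/G_x\simeq BG_x.
\]
As $G_x$ is finite, $H^*(BG_x,\Q)=\Q$ is concentrated in degree $0$, i.e. every fibre of $p$ is rationally acyclic. Running the Leray spectral sequence of $p$ with $\Q$ coefficients, the associated sheaves $\mathcal H^q(p)$ then vanish for $q>0$ and reduce to the constant sheaf $\underline{\Q}$ for $q=0$, so the sequence collapses and $p^*$ is an isomorphism; equivalently one may cite the Vietoris--Begle mapping theorem. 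Here the hypothesis that $X$ is a finite $CW$ complex is essential: it guarantees that $X/T^m$ is compact and finite-dimensional, stratified by finitely many orbit types with $p$ restricting to a fibre bundle over each stratum, so the local hypotheses of these theorems are met. To circumvent the non-compactness of $ET^m$ one works with the finite approximations $E_NT^m=(S^{2N+1})^m$ and passes to the limit, the statement holding in each fixed cohomological degree once $N$ is large. I expect this to be the main obstacle, since it is where almost freeness and finiteness of $X$ are genuinely used.

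Finally I would transport the differential. Because $p^*$ is a ring isomorphism there is a unique rational class $\bar\tau\in H^2(X/T^m,\Q^m)$ with $p^*\bar\tau=\tau\otimes\Q$; it plays the role of the rational characteristic class of the orbibundle $X\to X/T^m$. Defining the differential on $H^*(X/T^m,\Q)\otimes\Lambda(u_1,\dots,u_m)$ by $u_i\mapsto\bar\tau_i$ and extending by the Leibniz rule, the map $p^*\otimes\id$ becomes an isomorphism of differential graded algebras intertwining this differential with the one appearing in~(\ref{add model1}). Note that only the ring isomorphism and the correspondence of characteristic classes are needed, not a rational homotopy equivalence, so no extra formality input is required beyond that already used for~(\ref{add model1}). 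Passing to cohomology and combining with~(\ref{add model1}) then yields the desired equivalence~(\ref{add model}).
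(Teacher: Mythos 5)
Your proposal follows essentially the same route as the paper: both deduce~(\ref{add model}) from~(\ref{add model1}) by showing the projection from the Borel construction to $X/T^m$ is a rational cohomology isomorphism, using the finite approximations $X\times_{T^m}(S^{2n+1})^m$, the fact that the fibres $(S^{2n+1})^m/G_x$ are rationally highly acyclic because the stabilizers $G_x$ are finite, the Vietoris--Begle theorem, and a passage to the limit. Your final paragraph transporting the differential (i.e.\ matching the characteristic class $\tau\otimes\Q$ under the isomorphism) is a careful point the paper leaves implicit, but it is a refinement of, not a departure from, the paper's argument.
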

\begin{proof}
According to isomorphism~(\ref{add model1}) 
\[
H^*(X,\Q) \simeq H\bigl[H^*(X_{T^m},\Q)\otimes H^*(T^m,\Q), d\bigr].
\]
Therefore to prove proposition it is enough to show that algebras $H^*(X_{T^m},\Q)$ and $H^*(X/T^m,\Q)$ are isomorphic. For each $n\in \mathbb{N}$ consider projection on the first factor: 
\[
f\colon Y_n:=X\times_{T^m}(S^{2n+1})^m \to X/T^m.
\]
Note, that full preimage $f^{-1}(z)$ of a point $z\in X/T^m$ is a space $(S^{2n-1})^m/G_x$. Here $x\in X$ is an arbitrary preimage of $z$ under natural projection on the orbit space, $G_x\subset T^m$ is a stabilizer of $x\in X$. As $|G_x|<\infty$, so  $H^i\bigl((S^{2n+1})^m/G_x,\Q\bigr)=0$ for $0<i<2n$, indeed $(S^{2n+1})^m/G_x$ is a skeleton of classifying space of a group $G_x$. Hence, by Vietoris-Begle theorem~\cite[9.15]{sp} map $$f^*\colon H^r(X/T^m,\Q)\to H^r(Y_n,\Q)$$ is an isomorphism for $r<2n$. Passing to the limit we have $$H^*(X/T^m,\Q)\simeq H^*(X\times_{T^m}ET^m,\Q).$$    
\end{proof}

\begin{remark}
As a consequence of the proof we have, that for a space with almost free $T^m$ action formality of the homotopy orbit space $X_{T^m}$ is equivalent to the formality of the ordinary quotient $X/T^m$.
\end{remark}

Note that under hypothesis of the theorem algebra $H^*(X/T^m,\Q)$ is finite dimensional. This fact will play crucial role further. 

\begin{remark}
In the paper~\cite{pu} V.~Puppe proved toral rank conjecture for $m\le 3$. Instead of principal $T^m$-bundle $T^m\to X\times ET^m\to X_{T^m}$ he employed fibration from the Borel construction $X\to X\times_{T^m}ET^m\to BT^m$ for analysis of $H^*(X,\Q)$. The same fibration was used in~\cite[\S7.3.2]{f-o-t08} for the study of minimal model of~$X$.  
\end{remark}

\section{Horrocks conjecture}
Denote by $\mathcal{A}$ an algebra $H^*(X/T^m,\Q)$ and by $\Lambda(m)$ an exterior algebra $H^*(T^m,\Q)=\Lambda(u_1,\dots,u_m)$. For modules over polynomial algebra $S(m)=\Q[v_1,\dots,v_m]$ we denote dimension over $\Q$ of the underlying vector space by $\dim$.

Isomorphism~(\ref{add model}) allows to reduce estimation of cohomology rank of a space with torus action to purely algebraic problem of computation of certain CDGA cohomology: $H[\mathcal{A}\otimes\Lambda(m), d].$

Differential $d$ is specified on $\Lambda^1$ component, zero on the $\mathcal{A}$ and extended on the whole algebra by the Leibniz rule. The map $d\colon \Lambda^1(m)\to \mathcal{A}^2$ defines on the algebra $\mathcal{A}$ structure of $S(m)$-module. From the topological point of view this is nothing but $H^*_{T^m}(pt,\Q)$-module structure in equivariant cohomology $H^*_{T^m}(X,\Q)$. The algebra $[\mathcal{A}\otimes\Lambda(m), d]$ we are interested in is a Koszul complex calculating $\Tor^*_{S(m)}(\mathcal{A},\Q)$, so $H[\mathcal{A}\otimes\Lambda(m), d]=\Tor^*_{S(m)}(\mathcal{A},\Q)$.   

In view of appearance of modules $\Tor^*_{S(m)}(\sA,\Q)$ it is naturally to formulate Horrocks conjecture first stated in~\cite[p.~453]{be77}, \cite[Problem~24]{ha79}. For our purposes we do not need the most general formulation, so we give the following particular one. Further we assume that all modules over $S(m)$ are graded.
\begin{conjecture}
Let $M$ be a module over polynomial ring $S(m)$, $\dim M<\infty$, then 
\[
\dim \Tor_{S(m)}^{-i}(M,\Q)\ge\binom{m}{i},\qquad i=0,\dots, m.
\]
\end{conjecture}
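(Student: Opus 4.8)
The plan is to study the minimal free resolution of $M$ over $S(m)$ and bound from below the ranks of its free modules, since these ranks are exactly the numbers $\beta_i:=\dim\Tor_{S(m)}^{-i}(M,\Q)$ we must control. First I would observe that a finite-dimensional module has finite length, hence $\depth M=0$; by the Auslander--Buchsbaum formula $\pdim M=m-\depth M=m$, so the minimal resolution
\[
0\to F_m\to\cdots\to F_1\to F_0\to M\to 0
\]
has length exactly $m$ and $\beta_i=\operatorname{rank}F_i$. In particular $\beta_0\ge 1=\binom{m}{0}$ because $M\neq 0$, and $\beta_m\ge 1=\binom{m}{m}$ because the resolution is nontrivial in top homological degree. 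Since $M$ is torsion, the alternating sum $\sum_{i=0}^m(-1)^i\beta_i=\operatorname{rank}M=0$ gives one linear relation among the $\beta_i$.

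Next I would extract the near-boundary inequalities from the Buchsbaum--Eisenbud acyclicity criterion. Writing $r_i=\operatorname{rank}\partial_i$ one has $r_i+r_{i+1}=\beta_i$, and the ideal of $r_i\times r_i$ minors of $\partial_i$ must have depth $\ge i$. Because $M$ has finite length, the injective top map $\partial_m$ satisfies $r_m=\beta_m$ and its ideal of maximal minors has depth $m$; the codimension bound for ideals of maximal minors then forces $\beta_{m-1}\ge m+\beta_m-1\ge m=\binom{m}{m-1}$, and dualizing the resolution yields $\beta_1\ge m=\binom{m}{1}$ by the same argument. For $m\le 4$ a more careful bookkeeping of the ranks $r_i$ together with the minor-ideal depth conditions suffices to force every $\binom{m}{i}$, which is the assertion quoted (without proof) from \cite{eg88}.

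The genuine difficulty is the \emph{middle} Betti numbers for $m\ge 5$, where the boundary codimension arguments no longer suffice. Here I would attempt to invoke the Evans--Griffith Syzygy Theorem \cite{e-g} to force every intermediate syzygy module to have rank growing with its homological position, and then convert such rank estimates into binomial lower bounds on $\beta_i$ through a Lefschetz-type lower-bound count on Koszul homology. An alternative route is to pass to the exterior side: under Koszul duality the complex $\sA\otimes\Lambda(m)$ built in Section~1 becomes a module over the exterior algebra, and one may hope the binomial profile reflects a rank inequality for exterior modules that is more accessible than its commutative counterpart.

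I expect the middle terms to be the main obstacle, and this is precisely where the conjecture remains open. Gr\"obner degeneration to a monomial module---for which \cite{eg88} proves the bound---only \emph{increases} Betti numbers, so it produces an upper bound on $\beta_i(M)$ and cannot bound $\beta_i(M)$ from below; thus one cannot reduce to the solved monomial case. The strongest unconditional general estimates, such as the quadratic lower bound of Avramov--Buchweitz \cite{a-b} on the total rank, fall far short of the exponential total $2^m$, and closing the gap between polynomial and exponential growth for the interior syzygies is the crux that no present method reaches.
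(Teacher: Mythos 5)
This statement is presented in the paper as a \emph{conjecture} (the Buchsbaum--Eisenbud--Horrocks conjecture); the paper contains no proof of it, and it is open in general. Your proposal correctly recognizes this: you do not claim a complete proof, you establish only the boundary cases, and you identify the interior Betti numbers for $m\ge 5$ as the genuine obstruction. That diagnosis is accurate, and your observation that Gr\"obner degeneration to the monomial case settled in \cite{eg88} only \emph{increases} Betti numbers, and therefore cannot be used to deduce the general case from the monomial one, is correct and worth keeping.

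What the paper does prove is exactly the four boundary inequalities $i=0,1,m-1,m$ (Lemma~\ref{tor1}), and your route to them is genuinely different. You use the Buchsbaum--Eisenbud acyclicity criterion together with the Eagon--Northcott bound on the grade of determinantal ideals: since the top map $\partial_m$ of the minimal resolution is injective and the ideal of its maximal minors has grade $m$, one gets $\beta_{m-1}\ge m+\beta_m-1$, and dualizing the resolution gives $\beta_1\ge m+\beta_0-1$. The paper instead argues elementarily: for $\beta_1$ it projects the first syzygy module onto a single free summand, obtaining a proper ideal $\mathcal J\subset S(m)$ with $\dim S(m)/\mathcal J<\infty$, and concludes from the intersection-dimension theorem of \cite{har77} that $\mathcal J$ requires at least $m$ generators; for $i=m-1,m$ it uses self-duality of the Koszul complex, $\Tor^{-i}_{S(m)}(M,\Q)\cong\Tor^{-(m-i)}_{S(m)}(M^\star,\Q)$ with $M^\star=\hom_\Q(M,\Q)$. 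Your determinantal version is slightly stronger, since it carries the correction terms $\beta_0-1$ and $\beta_m-1$; combined with the Euler-characteristic relation this is exactly what settles $m=4$ (namely $\beta_2=\beta_1+\beta_3-\beta_0-\beta_4\ge 6$), which the paper only attributes to \cite{eg88} without proof, while the paper contents itself with linear-in-$m$ total-rank bounds and the quoted quadratic bound of \cite{a-b}. Neither your argument nor the paper's reaches the middle binomial coefficients for $m\ge 5$; your proposal is right that this is where every known method, including the Syzygy Theorem of \cite{e-g}, falls short.
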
 

There is also weak variant of the conjecture 
\begin{conjecture}
Let $M$ be a module over polynomial ring $S(m)$, $\dim M<\infty$, then 
\begin{equation}\label{weak horr}
\dim \Tor_{S(m)}^*(M,\Q)\ge2^m.
\end{equation}
\end{conjecture}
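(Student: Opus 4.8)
The plan is to first normalize the homological shape of the problem and then attack the bound by a generic-hyperplane induction on $m$, using the Syzygy Theorem to control the interior syzygies. First I would record that, since $\dim M<\infty$, the module $M$ is supported only at the irrelevant maximal ideal $\mathfrak m=(v_1,\dots,v_m)$, so every element of $\mathfrak m$ acts nilpotently and $\depth M=0$. By the Auslander--Buchsbaum formula $\pdim M=m-\depth M=m$, so the minimal free resolution of $M$ has length exactly $m$: we have $\Tor^{-i}_{S(m)}(M,\Q)=0$ for $i>m$, while $\Tor^{0}_{S(m)}(M,\Q)=M/\mathfrak m M\ne 0$ and $\Tor^{-m}_{S(m)}(M,\Q)=\hom_{S(m)}(\Q,M)=\mathrm{socle}(M)\ne 0$. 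Hence $\dim\Tor^*_{S(m)}(M,\Q)=\sum_{i=0}^m b_i$ with $b_i:=\dim\Tor^{-i}_{S(m)}(M,\Q)$ is a finite sum with nonvanishing extreme terms, and (\ref{weak horr}) is a lower bound on this total.

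The cleanest route in principle is to deduce (\ref{weak horr}) from the strong conjecture stated just above: summing $\dim\Tor^{-i}_{S(m)}(M,\Q)\ge\binom{m}{i}$ over $i=0,\dots,m$ gives $\dim\Tor^*_{S(m)}(M,\Q)\ge\sum_{i=0}^m\binom{m}{i}=2^m$. Since the strong form is itself open in general, for an unconditional argument I would instead try to extract the factor $2^m$ directly.

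The direct attempt is induction on $m$ through a generic linear form $\ell=v_m$. Writing $K'=K(v_1,\dots,v_{m-1};S(m))$ and $N_i=H_i(K'\otimes_{S(m)}M)$ for the partial Koszul homology, each $N_i$ is a finite-dimensional $\Q[\ell]$-module on which $\ell$ acts nilpotently (everything being supported at the origin). Factoring the full Koszul complex as $K(\ell)\otimes K'$ and passing to the mapping cone yields the long exact sequence
\[
\cdots\to N_i\xrightarrow{\ell}N_i\to\Tor^{-i}_{S(m)}(M,\Q)\to N_{i-1}\xrightarrow{\ell}N_{i-1}\to\cdots,
\]
so that $b_i=c_i+c_{i-1}$ with $c_i:=\dim\ker(\ell\mid N_i)=\dim\mathrm{coker}(\ell\mid N_i)$, and therefore $\dim\Tor^*_{S(m)}(M,\Q)=2\sum_i c_i$. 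The target now reads $\sum_i c_i\ge 2^{m-1}$.

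The main obstacle is precisely this last inequality. The numbers $c_i$ are governed by the partial Koszul homology $N_\bullet$ over $S(m)$, which is not the $\Tor$ of any module over $S(m-1)$, so the induction hypothesis does not apply to it and the recursion is circular: it only reproves the evenness $\dim\Tor^*_{S(m)}(M,\Q)\in 2\Z$. To break the circle I would invoke the Evans--Griffith Syzygy Theorem to bound the interior syzygy ranks from below in the manner of Avramov--Buchweitz, which gives the quadratic estimate $\dim\Tor^*_{S(m)}(M,\Q)\ge\tfrac32(m-1)^2+8$; this already forces $\dim\Tor^*_{S(m)}(M,\Q)\ge 2^m$ for $m\le5$, the two bounds coinciding at $m=5$. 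Pushing past the quadratic regime to the full exponential bound (\ref{weak horr}) for all $m$ is the genuinely hard part: as far as I can see it requires input beyond elementary homological algebra, such as a faithfulness or full-support statement for $\Tor^*_{S(m)}(M,\Q)$ viewed as a module over the exterior algebra $\Lambda(m)=\Tor^*_{S(m)}(\Q,\Q)$, or the finer techniques used to control total Betti numbers of finite-length modules.
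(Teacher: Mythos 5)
You were not expected to succeed here, and you correctly did not: the statement is stated in the paper as a \emph{conjecture} (the weak form of the Buchsbaum--Eisenbud--Horrocks conjecture), and the paper contains no proof of it. What the paper does prove is exactly the kind of partial result you arrive at: a lemma giving $\dim\Tor^{0}\ge1$, $\dim\Tor^{-1}\ge m$, $\dim\Tor^{-(m-1)}\ge m$, $\dim\Tor^{-m}\ge1$; a theorem deducing, from vanishing of the homological Euler characteristic of the Koszul complex (so $S_{odd}=S_{even}$, the paper's version of your evenness statement $\dim\Tor^{*}_{S(m)}(M,\Q)=2\sum_i c_i$), the linear bounds $5m-4$ for even $m\ge4$ and $3m-1$ for odd $m$; and the quadratic bound $\tfrac32(m-1)^2+8$ quoted from Avramov--Buchweitz, which yields $2^m$ only through $m=5$. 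Your fallback on Evans--Griffith/Avramov--Buchweitz is precisely the paper's route. One caveat: for $m=3,4$ your toolkit is strictly weaker than the paper's, since you only establish nonvanishing of the extreme Betti numbers, whereas the paper's bounds $b_1, b_{m-1}\ge m$ are what make its linear estimates hit $2^m$ exactly at $m=3,4$ (note $3m-1=8$ at $m=3$ and $5m-4=16$ at $m=4$); the Avramov--Buchweitz inequality alone, as stated for $m\ge5$, does not cover those cases.

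One technical correction to your obstruction analysis. The partial Koszul homology $N_i=H_i\bigl(K(v_1,\dots,v_{m-1};S(m))\otimes_{S(m)}M\bigr)$ \emph{is} the $\Tor$ of a module over $S(m-1)$: restricting scalars along $S(m-1)\subset S(m)$, the module $M$ is still finite dimensional over $S(m-1)$, and since the Koszul complex on $v_1,\dots,v_{m-1}$ only sees the $S(m-1)$-structure, one has $N_i\cong\Tor^{-i}_{S(m-1)}(M,\Q)$. So the induction hypothesis \emph{does} apply to $N_\bullet$ and gives $\sum_i\dim N_i\ge2^{m-1}$. The genuine reason your induction fails is different: your long exact sequence expresses the answer through $c_i=\dim\ker(\ell\mid N_i)$, and $c_i$ can be far smaller than $\dim N_i$ --- the inductive bound controls total dimensions, not the sizes of $\ell$-kernels --- so $\sum_i c_i\ge2^{m-1}$ does not follow. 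That is where the argument honestly stops, which is consistent with the conjecture's open status and with the fact that the paper itself stops at linear and quadratic bounds.
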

\begin{proposition}\label{horr-trk}
Weak Horrocks conjecture implies toral rank conjecture for the spaces with formal quotient.
\end{proposition}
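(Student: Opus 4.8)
The plan is to read the desired inequality directly off the algebraic model built in Section~1. The key observation is that isomorphism~(\ref{add model}) together with the identification $H[\sA\otimes\Lambda(m),d]=\Tor^*_{S(m)}(\sA,\Q)$ recorded just above expresses the \emph{total} rational cohomology of $X$ as a $\Tor$-module over the polynomial ring $S(m)=\Q[v_1,\dots,v_m]$. Consequently the toral rank conjecture for $X$ becomes nothing more than the weak Horrocks bound~(\ref{weak horr}) applied to one specific finite-dimensional module, namely $\sA=H^*(X/T^m,\Q)$.

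First I would set $M=\sA$ and verify that it is a legitimate object for~(\ref{weak horr}): its $S(m)$-module structure is the one induced by $d\colon\Lambda^1(m)\to\sA^2$, i.e.\ the equivariant cohomology action of $H^*_{T^m}(pt,\Q)$ on $H^*_{T^m}(X,\Q)$, and its $\Q$-dimension is finite by the remark following the Proposition (this uses that $X$ is a finite $CW$ complex and that the action is almost free, so that $H^*(X/T^m,\Q)$ is finite-dimensional). Next I would invoke~(\ref{add model}) to obtain an isomorphism of graded vector spaces
\[
H^*(X,\Q)\simeq\Tor^*_{S(m)}(\sA,\Q),
\]
and take total $\Q$-dimensions of both sides, which gives $\hrk(X,\Q)=\dim\Tor^*_{S(m)}(\sA,\Q)$. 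Finally I would apply the weak Horrocks conjecture~(\ref{weak horr}) to $M=\sA$, yielding $\dim\Tor^*_{S(m)}(\sA,\Q)\ge 2^m$, and hence $\hrk(X,\Q)\ge 2^m$, which is exactly the toral rank conjecture.

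Given the machinery of Section~1, the implication is thus essentially formal, and there is no single hard computational step. The only point demanding genuine care is the hypothesis check that lets us invoke~(\ref{weak horr}): one must confirm that $\sA$ really is a finite-dimensional graded $S(m)$-module, which rests on two earlier inputs—formality of $X/T^m$, needed for the collapse at $E_3$ and hence for the validity of the model~(\ref{add model}), and almost freeness together with finiteness of $X$, needed to bound $\dim_{\Q}H^*(X/T^m,\Q)$. I expect the remaining subtlety to be purely bookkeeping, namely matching grading conventions so that the summed dimension $\sum_{i\ge 0}\dim H^i(X,\Q)$ coincides with $\dim\Tor^*_{S(m)}(\sA,\Q)$ with nothing lost in the identification.
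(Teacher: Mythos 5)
Your proposal is correct and follows essentially the same route as the paper: identify $H^*(X,\Q)$ with $\Tor^*_{S(m)}(\sA,\Q)$ via the model~(\ref{add model}) and the Koszul-complex interpretation, note that finiteness of $X$ and almost freeness of the action force $\dim\sA<\infty$, and then apply the weak Horrocks bound~(\ref{weak horr}) to $M=\sA$. The only difference is that you spell out the hypothesis checks (the $S(m)$-module structure on $\sA$ and the finite-dimensionality) slightly more explicitly than the paper does, which is harmless.
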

\begin{proof}
Assume that action $T^m\colon X$ satisfies hypothesis of the toral rank conjecture. Then by~(\ref{add model})
\[
H^*(X,\Q)\simeq \Tor_{S(m)}^*(\sA,\Q).
\]
$X$ is a finite dimensional $CW$ complex, so $X/T^m$ is finite dimensional as well, thus $\dim\sA<\infty$. By weak Horrocks conjecture~(\ref{weak horr}) one has
\[
\dim H^*(X,\Q)=\dim \Tor_{S(m)}^*(\sA,\Q)\ge 2^m.
\] 
\end{proof}
\begin{remark}
The formality assumption in the statement of proposition looks rather technical, so we wonder whether it could be eliminated. 
\end{remark}

Further we prove several inequalities from Horrocks conjecture and derive from them weak Horrocks conjecture in some particular cases.
\begin{lemma}\label{tor1}
Let $M$ be a module over polynomial ring $S(m)$, $\dim M<\infty$, then 
\begin{itemize}
\item $\dim\Tor_{S(m)}^0(M,\Q) \ge 1,$
\item $\dim\Tor_{S(m)}^{-1}(M,\Q) \ge m,$
\item $\dim\Tor_{S(m)}^{-(m-1)}(M,\Q) \ge m,$
\item $\dim\Tor_{S(m)}^{-m}(M,\Q) \ge 1.$
\end{itemize}
\end{lemma}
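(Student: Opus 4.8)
The plan is to read $\dim\Tor_{S(m)}^{-i}(M,\Q)$ as the $i$-th Betti number $\beta_i=\mathrm{rank}\,F_i$ of a \emph{minimal} free resolution $\cdots\to F_1\to F_0\to M\to0$ of $M$ over $S:=S(m)$; this identification holds because after applying $-\otimes_S\Q$ the differentials of a minimal resolution vanish. Since $M$ is finite-dimensional over $\Q$ it has finite length, so $\depth M=0$ and Auslander--Buchsbaum gives $\pdim M=m$; in particular $F_m\ne0$. Two of the four inequalities are then immediate: $\beta_0=\dim M/\mathfrak m M\ge1$ by Nakayama (as $M\ne0$), which is the first bound, and $\beta_m\ge1$ because the minimal resolution has length exactly $m$, which is the fourth bound.

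The heart of the matter is the second bound $\beta_1\ge m$, and I would prove it with Fitting ideals. Fix the minimal presentation $F_1\to F_0\to M\to0$ and let $\phi$ be its presentation matrix, of size $\beta_0\times\beta_1$ and with entries in $\mathfrak m$; consider the $0$-th Fitting ideal $\mathrm{Fitt}_0(M)=I_{\beta_0}(\phi)$, the ideal of maximal minors. Its vanishing locus is $\mathrm{Supp}(M)=\{\mathfrak m\}$, so $\mathrm{Fitt}_0(M)$ is a proper nonzero $\mathfrak m$-primary ideal and hence has height $m$. Now apply the classical codimension bound for determinantal ideals: for a $p\times q$ matrix $\phi$ with $I_t(\phi)$ a proper ideal one has $\mathrm{ht}\,I_t(\phi)\le(p-t+1)(q-t+1)$. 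Taking $p=\beta_0$, $q=\beta_1$, $t=\beta_0$ yields $m=\mathrm{ht}\,\mathrm{Fitt}_0(M)\le\beta_1-\beta_0+1$, that is, $\beta_1\ge m+\beta_0-1\ge m$.

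For the third bound $\beta_{m-1}\ge m$ I would invoke duality over the Gorenstein (indeed regular) ring $S$. Since $M$ is a finite length module of projective dimension $m$, it is Cohen--Macaulay of codimension $m$, so $\mathrm{Ext}^j_S(M,S)=0$ for $j\ne m$; dualizing the minimal resolution $F_\bullet$ therefore produces a complex $\hom_S(F_\bullet,S)$ which is a minimal free resolution of $M^\vee:=\mathrm{Ext}^m_S(M,S)$, minimality surviving because transposing a matrix with entries in $\mathfrak m$ keeps the entries in $\mathfrak m$. Consequently $\beta_i(M^\vee)=\beta_{m-i}(M)$, and $M^\vee$ is again a nonzero finite length module. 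Applying the already proved inequality $\beta_1\ge m$ to $M^\vee$ gives $\beta_{m-1}(M)=\beta_1(M^\vee)\ge m$, as desired. (The fourth bound can likewise be re-derived as $\beta_m(M)=\beta_0(M^\vee)\ge1$.)

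The only genuine obstacle is $\beta_1\ge m$; the remaining bounds reduce to Nakayama, Auslander--Buchsbaum, and the self-duality just described. Within that step the single nontrivial input is the codimension estimate $\mathrm{ht}\,I_t(\phi)\le(p-t+1)(q-t+1)$, so the main thing to get right is identifying $\mathrm{Fitt}_0(M)$ as the $\mathfrak m$-primary ideal of maximal minors and verifying that it is proper and nonzero (both following from $0\ne M$ having support exactly $\{\mathfrak m\}$). I should also make sure the duality is an honest isomorphism of minimal resolutions, so that the Betti numbers transform exactly by $i\mapsto m-i$.
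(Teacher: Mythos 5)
Your proof is correct, but it takes a genuinely different route from the paper on both of the nontrivial points. For the key inequality $\beta_1\ge m$, you use the Fitting ideal $\mathrm{Fitt}_0(M)=I_{\beta_0}(\phi)$ of the minimal presentation together with the Eagon--Northcott height bound $\mathrm{ht}\,I_t(\phi)\le(p-t+1)(q-t+1)$; the paper instead projects the first syzygy module onto a single rank-one free summand, observes that the resulting ideal $\mathcal{J}$ is proper, $\mathfrak m$-primary and generated by $s=\beta_1$ elements, and then applies only Krull's height theorem (quoted as Hartshorne's affine dimension theorem for intersections of hypersurfaces in $\C^m$). Your determinantal argument is heavier machinery but buys a sharper conclusion, $\beta_1\ge m+\beta_0-1$ (and dually $\beta_{m-1}\ge m+\beta_m-1$), whereas the paper's projection trick is essentially self-contained. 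For the duality step, the paper takes the $\Q$-linear dual of the Koszul complex itself, noting $\bigl[M^\star\otimes\Lambda(\xi_1,\dots,\xi_m),\partial\bigr]$ computes $\Tor_{S(m)}^{-(m-i)}(M^\star,\Q)$ with $M^\star=\hom_\Q(M,\Q)$ --- an elementary observation needing no Ext or Cohen--Macaulay theory --- while you dualize the minimal free resolution into $S$, invoking the vanishing $\mathrm{Ext}^j_S(M,S)=0$ for $j\ne m$ to identify $\hom_S(F_\bullet,S)$ as a minimal resolution of $\mathrm{Ext}^m_S(M,S)$; these are Matlis-dual implementations of the same Betti-number symmetry $\beta_i(M)=\beta_{m-i}(\mathrm{dual})$. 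Finally, you obtain $\beta_m\ge1$ directly from Auslander--Buchsbaum ($\depth M=0$, so $\pdim M=m$), whereas the paper deduces it from duality plus the generator count; both are fine, and your version has the minor virtue of making the length-$m$ resolution explicit before the duality step rather than after.
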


\begin{proof}
The first inequality obviously holds, since $\dim\Tor_{S(m)}^0(M,\Q)$ is exactly the number of generators of $M$.

Lets prove the second inequality. Consider exact sequence
\[
0\to \mathcal{I}\to \bigoplus_{i=1}^k F_i\to M\to 0, 
\]
where $k=\dim \Tor_{S(m)}^0(M,\Q)$ is the minimal number of generators of $M$, $F_i$ are free $S(m)$-modules of rank 1 and $\mathcal I$ is the kernel of natural projection. Lemma states that the number of generators of $\mathcal I$ is at least $m$. Indeed, let $f_1,\dots, f_s\in \bigoplus_{i=1}^k F_i$ be the generators of $I$, then $\phi_1=\pr_1(f_1),\dots, \phi_s=\pr_1(f_s)$ generate some homogeneous ideal $\mathcal{J}$ in $F_1$. This ideal is proper, since $k$ is minimal number of generators, and $\dim F_1/\mathcal{J}\le\dim M<\infty$.   

To finish the proof we need to find lower bound for the number of generators of ideal $\mathcal{J}\varsubsetneq S(m)$ such that $\dim S(m)/\mathcal{J}<\infty$. By the standard result from commutative algebra~\cite[Th. 7.2]{har77}, dimension of the intersection of conic hypersurfaces $\phi_i(v_1,\dots,v_m)=0$, $i=1,\dots, s$ in $\C^m$ is $\ge m-s$. From the other hand $\Spec\, (S(m)/\mathcal{J})\otimes\C=\{0\}$ since all elements of positive degree in $\C[v_1,\dots, v_m]/\mathcal{J}$ are nilpotent. Therefore $s\ge m$.

To prove the last two inequalities we consider instead of Koszul complex $[M\otimes\Lambda(u_1,\dots,u_m),d]$ its dual $[M^\star\otimes\Lambda(\xi_1,\dots,\xi_m),\partial]$, where $M^\star=\hom_\Q(M,\Q)$, $\xi_i\in\hom_\Q(\Lambda^1(m),\Q)$, and $\partial=d^*$. This spaces are well defined since $M$ and $\Lambda(m)$ are finite dimensional. It is easy to see that $(M^\star\otimes\Lambda(\xi_1,\dots,\xi_m),\partial)$ is a resolution computing $\Tor_{S(m)}^{-i}(M^\star,\Q)$, thus $$\Tor_{S(m)}^{-i}(M,\Q)=\Tor_{S(m)}^{-(m-i)}(M^\star,\Q).$$ In particular applying first two bounds for module $M^\star$  (corr. $i=m-1,m$) we obtain necessary inequalities for $M$.
\end{proof}
\begin{theorem}
Let $M$ be a finite dimensional module over $S(m)$, then
\begin{itemize}
\item $\dim\Tor_{S(m)}^*(M,\Q)\ge 5m-4$ for even $m\ge 4$,
\item $\dim\Tor_{S(m)}^*(M,\Q)\ge 3m-1$ for odd $m$.
\end{itemize}
\end{theorem}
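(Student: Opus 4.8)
The plan is to combine the four boundary inequalities of Lemma~\ref{tor1} with the single linear relation that the Betti numbers of a finite-length module necessarily satisfy, and then to distribute the available mass across the even- and odd-indexed homological degrees.

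Write $b_i=\dim\Tor_{S(m)}^{-i}(M,\Q)$ for the rank of the $i$-th term $F_i$ in the minimal free resolution $0\to F_m\to\cdots\to F_0\to M\to0$, and put $N=\dim\Tor_{S(m)}^{*}(M,\Q)=\sum_{i=0}^m b_i$. First I would record two standing facts. Since $\dim M<\infty$ we have $\depth M=0$, so the Auslander--Buchsbaum formula gives $\pdim M=m$; moreover a minimal resolution has no internal gaps, because if $F_i=0$ then Nakayama forces $F_{i+1}=0$, and as $F_m\ne0$ this yields $b_i\ge1$ for every $0\le i\le m$. In addition, Lemma~\ref{tor1} supplies the sharper boundary bounds $b_0\ge1$, $b_1\ge m$, $b_{m-1}\ge m$ and $b_m\ge1$.

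The key extra ingredient is an Euler-characteristic relation. Additivity of Hilbert series along the exact resolution gives $H_M(t)=\sum_{i=0}^m(-1)^i H_{F_i}(t)=K(t)/(1-t^2)^m$, where $K(t)=\sum_{i=0}^m(-1)^i P_i(t)$ and $P_i$ is the numerator of $H_{F_i}$ with $P_i(1)=b_i$. Since $\dim M<\infty$ the series $H_M(t)$ is a polynomial, so $(1-t^2)^m\mid K(t)$; evaluating at $t=1$ yields
\[
\sum_{i=0}^m(-1)^i b_i=K(1)=0 .
\]
Equivalently, the even- and odd-indexed Betti numbers carry equal total mass, $\sum_{i\ \mathrm{even}}b_i=\sum_{i\ \mathrm{odd}}b_i=\tfrac12 N$.

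It then remains only to bound one of these two half-sums from below in each parity of $m$, which is routine bookkeeping. For odd $m$ the indices $0$ and $m-1$ are both even, so $b_0\ge1$, $b_{m-1}\ge m$ together with $b_i\ge1$ on the remaining $(m-3)/2$ even indices give $\tfrac12 N\ge 1+m+\tfrac{m-3}{2}=\tfrac{3m-1}{2}$, whence $N\ge 3m-1$. For even $m\ge4$ the indices $1$ and $m-1$ are distinct and both odd, so $b_1\ge m$, $b_{m-1}\ge m$ together with $b_i\ge1$ on the remaining $m/2-2$ odd indices give $\tfrac12 N\ge 2m+\bigl(\tfrac{m}{2}-2\bigr)=\tfrac{5m}{2}-2$, whence $N\ge 5m-4$. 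The only place where the restrictions on $m$ are genuinely used is the requirement $1\ne m-1$ in the even case, i.e.\ $m\ne2$: this is what prevents the two bounds of weight $m$ from being charged to the same term, and keeping track of such index collisions is the main point to watch.
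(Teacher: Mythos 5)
Your proof is correct and follows essentially the same route as the paper: boundary bounds from Lemma~\ref{tor1}, the no-gap property giving $b_i\ge1$ for all $i$, the vanishing of the alternating sum of Betti numbers, and a parity split charging the two weight-$m$ bounds to distinct indices. The only cosmetic differences are that you derive $\sum_i(-1)^ib_i=0$ from Hilbert series of the resolution where the paper uses the zero Euler characteristic of the Koszul complex $M\otimes\Lambda(u_1,\dots,u_m)$, and for odd $m$ you bound the even-indexed half-sum where the paper bounds the odd-indexed one; both yield identical numbers.
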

\begin{proof}
Lemma~\ref{tor1} states that $\dim\Tor_{S(m)}^m(M,\Q)\ge 1$, so according to properties of $\Tor$ functor for all $i=0,\dots, m$ we have $\dim\Tor_{S(m)}^{-i}(M,\Q)\ge 1$, since otherwise the length of free resolution of module $M$ is less then $m$. Hence, as $\Tor^{-1}_{S(m)}(M,\Q)\ge m$ and $\Tor^{-(m-1)}_{S(m)}(M,\Q)\ge m$, we obtain 
\[
S_{odd}:=\sum_{i=2k+1}\dim\Tor_{S(m)}^i(M,\Q)\ge 1+m+\frac{m-3}{2},
\] 
for odd $m$ and
\[
S_{odd}\ge 2m+\frac{m-4}{2},
\]
for even $m$.
Since homological Euler characteristic $\chi_h(\Tor^*_{S(m)}(M,\Q))=\sum^m_{i=0} (-1)^i\dim\Tor_{S(m)}^{-i}(M,\Q)$ vanishes: 
\[
\chi_h(\Tor_{S(m)}^*(M,\Q))=\chi_h(M\otimes\Lambda(u_1,\dots,u_m))=0,
\] 
total rank $\dim\Tor_{S(m)}^*(M,\Q)=S_{odd}+S_{even}=2S_{odd}$, where $S_{even}:=\sum\limits_{i=2k}\dim\Tor_{S(m)}^{-i}(M,\Q).$ This finishes the proof.
\end{proof}

In fact, there is a stronger bound on total rank of $\Tor$-module holds. The following inequality is a particular case of Avramov and Buchweitz theorem.
\begin{theorem}[{\cite[Prop.\,1]{a-b}}]
Let $M$ be a finite dimensional module over $S(m)$, $m\ge 5$ then
\[
\dim\Tor_{S(m)}^*(M,\Q)\ge \frac32 (m-1)^2+8.
\]
\end{theorem}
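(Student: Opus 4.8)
The plan is to pass to the minimal free resolution of $M$ and bound the ranks of its syzygy modules. Since $\dim M<\infty$ over $\Q$, the module $M$ has finite length, hence $\depth M=0$; by the Auslander--Buchsbaum formula $\pdim M=m$, so the minimal free resolution
\[
0\to F_m\to F_{m-1}\to\cdots\to F_1\to F_0\to M\to 0
\]
has length exactly $m$ and $\dim\Tor^{-i}_{S(m)}(M,\Q)=\operatorname{rank}F_i$. Writing $\Omega_i$ for the $i$-th syzygy (with $\Omega_0=M$) and $r_i=\operatorname{rank}\Omega_i$, the short exact sequences $0\to\Omega_{i+1}\to F_i\to\Omega_i\to 0$ give $\operatorname{rank}F_i=r_i+r_{i+1}$ with $r_0=r_{m+1}=0$. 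Summing telescopes to
\[
\dim\Tor^*_{S(m)}(M,\Q)=\sum_{i=0}^m\operatorname{rank}F_i=2\sum_{i=1}^m r_i,
\]
so it suffices to bound the syzygy ranks from below.

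The key external input is the Evans--Griffith Syzygy Theorem~\cite{e-g}: over the regular ring $S(m)$ a non-free $i$-th syzygy module has rank at least $i$. For $1\le i\le m-1$ one has $\pdim\Omega_i=m-i\ge 1$, so $\Omega_i$ is not free and therefore $r_i\ge i$. To control the syzygies in the upper half of the resolution I would invoke the self-duality already used in the proof of Lemma~\ref{tor1}: the vector-space dual $M^\star$ is again a finite-length module with $\dim\Tor^{-i}_{S(m)}(M,\Q)=\dim\Tor^{-(m-i)}_{S(m)}(M^\star,\Q)$, and the dual of the minimal resolution of $M$ is, up to reversal, a minimal resolution of $M^\star$. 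Consequently $\Omega_i$ corresponds to the $(m-i)$-th syzygy of $M^\star$, and the Evans--Griffith bound applied to $M^\star$ yields $r_i\ge m-i$ as well. Combining the two estimates gives $r_i\ge\max(i,m-i)$ for $1\le i\le m-1$, together with $r_m\ge 1$.

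Substituting $r_i\ge\max(i,m-i)$ into the identity $\dim\Tor^*_{S(m)}(M,\Q)=2\sum_{i=1}^m r_i$ already produces a lower bound of the correct quadratic order $\tfrac32(m-1)^2+O(m)$. The main obstacle is the sharp additive constant: the elementary estimate $r_i\ge\max(i,m-i)$ is too lossy in the middle of the resolution, and recovering the precise value $\tfrac32(m-1)^2+8$ requires the finer rank information encoded in the partial Euler characteristics of the resolution (the strengthened form of the Syzygy Theorem), namely that consecutive syzygy ranks cannot simultaneously attain their minimal values. This refinement is exactly the content of Avramov and Buchweitz's argument~\cite[Prop.~1]{a-b}, and for $m\ge 5$ it upgrades the elementary quadratic estimate to the stated inequality; accordingly I would deduce the theorem as the indicated particular case of their result rather than reprove the sharp constant from scratch.
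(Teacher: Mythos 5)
The paper offers no proof of this theorem: it is stated as a direct quotation of Avramov--Buchweitz \cite[Prop.\,1]{a-b} (the introduction notes that their proof rests on the Evans--Griffith Syzygy Theorem), so your closing move of deducing the statement from that citation is exactly what the paper does, and to that extent the proposal is fine. Your added sketch of the mechanism --- minimal resolution, rank additivity $\operatorname{rank}F_i=r_i+r_{i+1}$, hence $\dim\Tor^*_{S(m)}(M,\Q)=2\sum_{i\ge1}r_i$, the Syzygy Theorem, and duality for finite-length modules --- does correctly identify the engine of the cited argument.

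One intermediate claim in the sketch is wrong, however, and worth flagging. The duality bookkeeping is off by one: dualizing the minimal resolution identifies $\operatorname{rank}\Omega_i(M)$ with $\operatorname{rank}\Omega_{m-i+1}(M^\star)$, not with $\operatorname{rank}\Omega_{m-i}(M^\star)$. At $i=1$ the corresponding module $\Omega_m(M^\star)=F_0^\star$ is the free top term of the dual resolution, where the Syzygy Theorem says nothing, and indeed the claim $r_1\ge m-1$ is false: for $M=\Q$, resolved by the Koszul complex, one has $r_i=\binom{m-1}{i-1}$, so $r_1=1$ for every $m$. The correct elementary bounds are $r_1\ge1$, $r_i\ge\max(i,\,m-i+1)$ for $2\le i\le m-1$, and $r_m\ge1$; for $m=5$ they give $\dim\Tor^*_{S(5)}(M,\Q)\ge2(1+4+3+4+1)=26$, short of the required $32$ (your stated bounds would give $30$, also short). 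So your conclusion that the elementary estimate misses the sharp constant, and that the theorem should therefore be taken from \cite[Prop.\,1]{a-b}, is sound --- but the bound $r_i\ge\max(i,m-i)$ is not merely ``too lossy'' at $i=1$; it fails there.
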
   

By Proposition~\ref{horr-trk} the same estimates hold for the rank of the cohomology ring of a finite-dimensional space with almost free formal $T^m$-action:
\begin{corollary}
Let $X$ be a finite space with almost free $T^m$-action, satisfying formality condition. Then $$\dim H^*(X,\Q)\ge \frac32 (m-1)^2+8.$$ In particular toral rank conjecture holds for $m\le 5$.
\end{corollary}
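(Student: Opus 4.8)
The plan is to feed the finite-dimensional module $\mathcal{A}=H^*(X/T^m,\Q)$ into the $\Tor$-estimates proved above. First I would recall that the Proposition establishing isomorphism~(\ref{add model}), together with the identification $H[\mathcal{A}\otimes\Lambda(m),d]=\Tor^*_{S(m)}(\mathcal{A},\Q)$, gives $H^*(X,\Q)\simeq\Tor^*_{S(m)}(\mathcal{A},\Q)$. Since $X$ is finite-dimensional, so is its quotient $X/T^m$, hence $\dim\mathcal{A}<\infty$ and $\mathcal{A}$ meets the hypotheses of all the $\Tor$-bounds above. Consequently every lower bound on $\dim\Tor^*_{S(m)}(M,\Q)$ for a finite-dimensional module $M$ transfers verbatim to $\dim H^*(X,\Q)$; applying the Avramov--Buchweitz theorem to $M=\mathcal{A}$ immediately yields $\dim H^*(X,\Q)\ge\frac32(m-1)^2+8$ for $m\ge5$.

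For the toral rank conjecture in the range $m\le5$ I would argue case by case, since the quadratic estimate is available only for $m\ge5$ and so must be supplemented by the linear ones for smaller $m$. At $m=5$ the quadratic bound gives $\frac32\cdot16+8=32=2^5$, exactly the required value. At $m=4$ the even branch of the earlier Theorem yields $5m-4=16=2^4$, and at $m=3$ its odd branch yields $3m-1=8=2^3$. The cases $m=1,2$ lie outside both branches---the even estimate requires $m\ge4$---so here I would apply Lemma~\ref{tor1} directly, summing the bounds $\dim\Tor^{0}\ge1$, $\dim\Tor^{-1}\ge m$, $\dim\Tor^{-m}\ge1$ over the distinct homological degrees to obtain $1+2+1=4=2^2$ for $m=2$ and $1+1=2=2^1$ for $m=1$. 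Together these five computations give $\dim H^*(X,\Q)\ge2^m$ for all $m\le5$.

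The proof is thus essentially bookkeeping, and the one point that demands genuine care---rather than a real obstacle---is the arithmetic at the boundaries: one must verify that the quadratic bound meets $2^m$ exactly at $m=5$, that each linear bound stays $\ge2^m$ precisely on the range where it is valid, and that the smallest $m$, for which no closed-form estimate applies, are still forced above $2^m$ by Lemma~\ref{tor1} alone.
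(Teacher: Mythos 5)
Your proof is correct, and its core---transferring the Avramov--Buchweitz bound to $\dim H^*(X,\Q)$ through the isomorphism $H^*(X,\Q)\simeq\Tor^*_{S(m)}(\mathcal{A},\Q)$ of Proposition~\ref{horr-trk}, with $\dim\mathcal{A}<\infty$ because $X/T^m$ is finite-dimensional---is exactly the paper's argument; the corollary carries no separate proof in the paper beyond this observation. Where you genuinely go beyond the paper is the clause ``toral rank conjecture holds for $m\le5$'': the paper presents this as an immediate consequence of the quadratic bound, which is slightly imprecise, since the Avramov--Buchweitz theorem is quoted with the hypothesis $m\ge5$ and therefore literally settles only the case $m=5$ (where indeed $\frac32\cdot16+8=32=2^5$). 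Your case analysis supplies precisely the missing bookkeeping, using only results already proved in the paper: $m=4$ via the even branch $5m-4=16=2^4$, $m=3$ via the odd branch $3m-1=8=2^3$, and $m=1,2$ via Lemma~\ref{tor1} directly, where your care to sum the bounds $\dim\Tor^{0}\ge1$, $\dim\Tor^{-1}\ge m$, $\dim\Tor^{-m}\ge1$ only over \emph{distinct} homological degrees gives $1+2+1=4=2^2$ and $1+1=2=2^1$. (For $m\le3$ one could alternatively invoke Puppe's unconditional result cited in the introduction, but your route keeps the argument self-contained within the formal setting.) In short: same approach as the paper, executed more carefully at the boundary cases.
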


\section{Stanley-Risner rings and moment-angle-complexes}
Toral rank and Horrocks conjectures could be verified and interpreted in combinatorial terms on lots of examples provided by toric topology. In this section we assign to any simplicial complex $K$ two important objects: Stanley-Risner ring $\Q[K]$~--- main tool of combinatorial commutative algebra and moment-angle-complex $\Zs_K$~--- a certain space with natural $T^m$-action. We validate conjectures for these objects and relate them with various combinatorial problems.

\begin{definition}
Let $K$ be a simplicial complex on the set of vertices $[m]$. \emph{Stenley-Risner idael} $I_{SR}$ is an ideal in the algebra $\k[v_1,\dots, v_m]$ ($\k$~--- is a ground field) generated by elements of the form $v_{i_1}\dots v_{i_k}$, where $\{i_1,\dots,i_k\}\not\in K$. \emph{Staley-Risner algebra} is the factor-algebra of polynomial ring by Staley-Risner ideal: $\k[K]=\k[v_1,\dots, v_m]/I_{SR}$.  
\end{definition}

\begin{definition}[{\cite[6.38]{bu-pa04}}]\label{ma-def}
Let $(X,A)$ be a pair of $CW$ complexes. For a subset $\omega\subset[m]$ define $$(X, A)^{\omega}:=\{(x_1,\dots, x_m)\in X^m|x_i\in A \mbox{ for } v_i\not\in\omega\}.$$ Let now $K$ be a simplicial complex on the set of vertices $[m]$. $K$-\emph{power} of the pair $(X,A)$ is a topological space $$(X, A)^K:=\bigcup_{\omega\in K}(X, A)^{\omega}.$$ 

Let $K$ be a simplicial complex on the set of vertices $[m]$. \emph{Moment-angle-complex} $\Zs_K$ is a $K$-power $$\Zs_K:=(D^2,S^1)^K.$$
\end{definition}

There is the standard coordinate-wise  $T^m$-action on the space $\Zs_K$. This action is not free, however for some $r$ the are subtori $T^r\subset T^m$ such that the restricted action is almost free. In~\cite{us2} is shown that the rank of subtorus in $T^m$ acting almost freely on $\Zs_K$ is equal to $r=m-n$, where $n=\dim K+1$.   

It is possible to prove the following estimate of the total cohomology rank of moment-angle-complex. This bound implies toral rank conjecture for arbitrary moment-angle-complexes.
\begin{theorem}[\cite{ca-lu09},\cite{us2}]\label{hrk theorem}
Let $K$ be a simplicial complex on the set of vertices $[m]$ with $\dim K=n-1$. Then $$\hrk(\Zs_K, \Q)\ge 2^{m-n}.$$
\end{theorem}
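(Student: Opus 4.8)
The plan is to deduce Theorem~\ref{hrk theorem} from the homological machinery already built up in Sections~1 and~2, applied to the moment-angle-complex $\Zs_K$ together with its almost free torus action. Recall from the discussion after Definition~\ref{ma-def} that a generic subtorus $T^r\subset T^m$ of rank $r=m-n$ acts almost freely on $\Zs_K$, where $n=\dim K+1$. Since $\Zs_K=(D^2,S^1)^K$ is a finite $CW$ complex, this restricted $T^r$-action satisfies the dimension hypothesis of the toral rank conjecture. First I would verify that the action satisfies the formality condition: the homotopy orbit space $(\Zs_K)_{T^m}$ is well known to be homotopy equivalent to the Davis--Januszkiewicz space, whose rational cohomology is exactly the Stanley--Reisner ring $\Q[K]$, and this space is formal. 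Passing to the rank-$r$ subtorus, the relevant quotient remains formal, so the equivalence of~(\ref{add model}) is available.

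Once formality is in hand, the strategy is to combine Proposition~\ref{horr-trk} with the weak Horrocks bound. Concretely, applying isomorphism~(\ref{add model}) to the almost free $T^r$-action gives
\[
H^*(\Zs_K,\Q)\simeq \Tor^*_{S(r)}(\sA,\Q),
\]
where $\sA=H^*(\Zs_K/T^r,\Q)$ is the finite-dimensional cohomology of the ordinary quotient and $S(r)=\Q[v_1,\dots,v_r]$ is the polynomial ring on $r=m-n$ generators. The weak Horrocks inequality~(\ref{weak horr}), applied to the finite-dimensional $S(r)$-module $\sA$, then yields
\[
\hrk(\Zs_K,\Q)=\dim\Tor^*_{S(r)}(\sA,\Q)\ge 2^r=2^{m-n},
\]
which is precisely the claimed bound. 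This is the same line of reasoning as in the proof of Proposition~\ref{horr-trk}, now specialized to $\Zs_K$ and to the subtorus of the maximal possible rank for which the action stays almost free.

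The main obstacle, I expect, is not the algebraic deduction but the verification of the two geometric inputs that make~(\ref{add model}) applicable: first, that the generic rank-$(m-n)$ subtorus genuinely acts almost freely (this is the cited result of~\cite{us2}, so I would quote it rather than reprove it), and second, that the orbit space $\Zs_K/T^r$ is formal. For the latter it is cleanest to argue at the level of the homotopy orbit space $(\Zs_K)_{T^r}$, exploiting the Remark following~(\ref{add model}) which identifies formality of $X/T^m$ with formality of $X_{T^m}$ for almost free actions; the formality of the Borel construction then follows from the formality of the Davis--Januszkiewicz space and the fact that the fibration relating the $T^m$- and $T^r$-Borel constructions is compatible with formal models. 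A secondary subtlety is that weak Horrocks is only known unconditionally in low dimensions, but here one can instead invoke the concrete bounds of Section~2, or note that for this particular module $\sA$ the needed inequality $\dim\Tor^*_{S(r)}(\sA,\Q)\ge 2^r$ can be checked directly via the Koszul complex $[\sA\otimes\Lambda(r),d]$, so the statement does not in fact rely on the full strength of the weak Horrocks conjecture.
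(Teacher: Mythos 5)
Your argument has a fatal gap: it rests on the weak Horrocks conjecture~(\ref{weak horr}), which in this paper (and in the literature) is an open conjecture, not a theorem. Proposition~\ref{horr-trk} is purely conditional --- ``weak Horrocks \emph{implies} the toral rank conjecture for formal quotients'' --- so combining it with~(\ref{weak horr}) applied to $\sA=H^*(\Zs_K/T^r,\Q)$ establishes Theorem~\ref{hrk theorem} only modulo that conjecture. Neither of your suggested repairs closes the gap: the unconditional bounds of Section~2 are linear or quadratic in the torus rank ($3r-1$, $5r-4$, $\tfrac32(r-1)^2+8$), which falls far short of $2^{r}=2^{m-n}$ except when $m-n\le 5$; and the claim that ``for this particular module $\sA$ the needed inequality can be checked directly via the Koszul complex'' is not an argument --- you identify no special property of $\sA$ (it is not a monomial quotient, for instance) that would make the exponential bound verifiable, and if such a direct check existed it would itself be the key lemma requiring proof. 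There is a second, independent gap: the formality hypothesis of Proposition~\ref{horr-trk} is asserted rather than proved. Formality of $(\Zs_K)_{T^m}$ (the Davis--Januszkiewicz space) does not automatically descend to the $T^r$-Borel construction: rationally $(\Zs_K)_{T^r}$ is the total space of a principal $T^{m-r}$-bundle over the Davis--Januszkiewicz space, and total spaces of torus bundles over formal spaces need not be formal (the Heisenberg nilmanifold, a circle bundle over $T^2$, is the standard counterexample). So even the hypotheses under which~(\ref{add model}) applies are unverified.

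The paper's own argument (the closing sentence of Section~3) avoids both problems by never passing to the almost free subtorus. It works with the full, non-free $T^m$-action: by Theorem~\ref{Z_K cohomolgy Tor} (Buchstaber--Panov), $H^*(\Zs_K,\Q)\cong\Tor^*_{S(m)}(\Q[K],\Q)$, where $\Q[K]=S(m)/I_{SR}$ is a \emph{monomial} quotient --- infinite-dimensional over $\Q$, but that is irrelevant here. The proven Evans--Griffith theorem (Theorem~\ref{bigrad Betti eneq}, from~\cite{eg88}) then gives $\dim\Tor^{-i}_{S(m)}(\Q[K],\Q)\ge\binom{\pdim\Q[K]}{i}$, and the Auslander--Buchsbaum argument in the Corollary gives $\pdim\Q[K]\ge m-\mathrm{krdim}\,\Q[K]=m-n$; summing over $i$ yields $\hrk(\Zs_K,\Q)\ge 2^{m-n}$ unconditionally, with no appeal to almost freeness of a subtorus, to formality of its quotient, or to any conjecture. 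If you want to salvage your structure, you must replace the appeal to~(\ref{weak horr}) by a proven input of this kind, and in practice that forces you back to the module $\Q[K]$ over $S(m)$ rather than $\sA$ over $S(r)$.
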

 
The cohomology ring of the space $\Zs_K$ was computed in \cite{bu-pa04}.
\begin{theorem}[{\cite[7.6]{bu-pa04}}]\label{Z_K cohomolgy Tor}
$$H^*(\Zs_K,\Z)\cong\Tor^{*,*}_{\Z[v_1,\dots,v_m]}(\Z[K],\Z).$$
\end{theorem}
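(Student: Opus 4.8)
The final statement to prove is Theorem~\ref{Z_K cohomolgy Tor}:
\[
H^*(\Zs_K,\Z)\cong\Tor^{*,*}_{\Z[v_1,\dots,v_m]}(\Z[K],\Z).
\]

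The plan is to compute both sides by identifying a single chain complex that simultaneously resolves $\Z[K]$ over $S(m)=\Z[v_1,\dots,v_m]$ and computes the cohomology of $\Zs_K$. The right-hand side is the more algebraic object, so I would first give it an explicit model. Since $\Z[K]$ is a graded $S(m)$-module, its $\Tor$ against the residue field $\Z$ (viewed as $S(m)/(v_1,\dots,v_m)$) is computed by tensoring $\Z[K]$ with the Koszul resolution of $\Z$ over $S(m)$. Concretely, this gives the bigraded differential algebra $[\,\Z[K]\otimes\Lambda(u_1,\dots,u_m),\ d\,]$ with $\bideg u_i=(-1,2)$, $\bideg v_i=(0,2)$, and differential $du_i=v_i$, $dv_i=0$, extended by the Leibniz rule. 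Thus $\Tor^{*,*}_{S(m)}(\Z[K],\Z)$ is the cohomology of this explicit Koszul-type complex, and the problem reduces to showing that the cellular (or singular) cochain complex of $\Zs_K$ has the same cohomology, compatibly with the bigrading.

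Next I would construct the topological side as a polyhedral-product-style decomposition. Recall $\Zs_K=(D^2,S^1)^K$ from Definition~\ref{ma-def}. The key geometric input is the standard cellular structure on $D^2$ with cells $1$, $S^1$ (a $1$-cell), and $D^2$ (a $2$-cell); taking products over the $m$ coordinates and restricting to the simplices $\omega\in K$ yields a $T^m$-invariant CW structure on $\Zs_K$ in which cells are indexed by pairs of disjoint subsets of $[m]$ whose union lies in $K$. Reading off the cellular cochain complex, one sees that a cell contributing an exterior generator $u_i$ (from the $1$-cell $S^1$) and a polynomial generator $v_i$ (from the $2$-cell $D^2$) in coordinate $i$ matches exactly the monomials appearing in $\Z[K]\otimes\Lambda(u_1,\dots,u_m)$, and the cellular coboundary matches the Koszul differential $du_i=v_i$ up to signs. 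This establishes a bigraded isomorphism of cochain complexes.

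The main obstacle I expect is bookkeeping rather than conceptual difficulty: one must match the bigrading on the topological side (where the first grading records homological degree coming from the number of $S^1$-factors and the second records total cohomological degree) with the internal and homological gradings of $\Tor^{*,*}$, and check that the signs in the cellular differential agree with the Koszul signs. A clean way to avoid a tangle of sign conventions is to filter $\Zs_K$ by the subcomplexes $(D^2,S^1)^\omega$ over faces $\omega$, or equivalently to use the multigraded refinement: each coordinate $i\in[m]$ carries its own $\Z$-grading, and both the cellular complex and the Koszul complex split as tensor products over coordinates, reducing the verification to the single-coordinate case $(D^2,S^1)$ where the identification $H^*(D^2,S^1)\cong\Tor_{\Z[v]}(\Z,\Z)$ is immediate. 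I would then invoke multigraded functoriality (or the algebraic K\"unneth/Eilenberg--Zilber theorem) to assemble the coordinate-wise isomorphisms into the global one, thereby obtaining the claimed isomorphism over $\Z$ and completing the proof.
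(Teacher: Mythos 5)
This theorem is not proved in the paper at all --- it is quoted from Buchstaber--Panov \cite[7.6]{bu-pa04} --- so your proposal has to be judged against the argument in that reference, whose standard form is exactly the cellular strategy you outline. Unfortunately, your central claim is false, and the falsehood hides precisely the hard part of the theorem. The cellular cochain complex of $\Zs_K$ \emph{cannot} be isomorphic to the Koszul complex $\bigl[\Z[K]\otimes\Lambda(u_1,\dots,u_m),d\bigr]$: the former has finite rank over $\Z$ (one generator $u_\sigma v_\omega$ for each pair of disjoint subsets $\sigma,\omega\subset[m]$ with $\omega\in K$, since a product cell uses each coordinate exactly once), whereas the latter has infinite rank (it contains $u_\sigma\otimes v^a$ for \emph{all} exponent vectors $a$ with support in $K$, with no square-freeness or disjointness restriction). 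What the cell structure actually yields is an isomorphism of $C^*(\Zs_K)$ with the finite quotient $R^*(K)=\Z[K]\otimes\Lambda(u_1,\dots,u_m)\big/(v_i^2=u_iv_i=0)$. The theorem therefore needs the additional, genuinely nontrivial statement that the quotient projection $\Z[K]\otimes\Lambda(u_1,\dots,u_m)\to R^*(K)$ is a quasi-isomorphism --- equivalently, in multigraded terms, that the Koszul complex is acyclic in every non-square-free multidegree (this is the content of Hochster-type theorems). That lemma is the heart of the Buchstaber--Panov proof, and it is absent from your proposal.

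Your proposed repair --- splitting both complexes as tensor products over the coordinates and invoking K\"unneth --- does not close this gap, because neither side splits for a general $K$. The Koszul \emph{resolution} of $\Z$ over $\Z[v_1,\dots,v_m]$ does factor coordinate-wise, but after tensoring with $\Z[K]$ the resulting complex does not, since $\Z[K]$ is not a tensor product of one-variable algebras (e.g.\ $\Z[v_1,v_2]/(v_1v_2)$); likewise $\Zs_K$ is a \emph{union} of products $(D^2,S^1)^\omega$, not a product. The coordinate-wise argument proves the statement only for $K=\Delta^{m-1}$, where both sides are $\Z$. (Also, your ``immediate'' one-variable identification is off: the relevant comparisons are $H^*(D^2)\cong\Tor_{\Z[v]}(\Z[v],\Z)$ and $H^*(S^1)\cong\Tor_{\Z[v]}(\Z,\Z)$, not $H^*(D^2,S^1)\cong\Tor_{\Z[v]}(\Z,\Z)$, whose two sides have different total ranks.) To handle general $K$ one must either prove the quasi-isomorphism lemma above (Buchstaber--Panov establish it for the simplex by the tensor-product argument and then extend it to all $K$), or decompose both sides by $\Z^m$-multidegree and identify each square-free component with the reduced simplicial cochain complex of the full subcomplex $K_\omega$ while proving vanishing elsewhere, or run a Mayer--Vietoris/colimit induction over the faces of $K$ --- your passing mention of filtering by the subcomplexes $(D^2,S^1)^\omega$ points in this last direction, but the compatibility of $\Tor$ with such colimits is exactly what would have to be proved. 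As written, the proposal settles the bookkeeping and omits the mathematical content.
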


The ring on the right hand side has only topological grading, while the ring on left hand side besides topological grading has additional homological grading coming from the structure of $\Tor$ module. Recall that this grading plays crucial role in the Horrocks conjecture. Besides ordinary Betti $b_i=\dim H^i(\Zs_K,\Q)$ numbers we define bigraded Betti numbers $\beta^{-i,2j}=\dim\Tor_{S(m)}^{-i,2j}(\Q[K],\Q)$. Here $$\Tor_{S(m)}^{*,*}(\Q[K],\Q)=H^{*,*}[\Q[K]\otimes \Lambda(u_1,\dots,u_m),d],$$ where bigrading is set on the generators $\bideg(v_i)=(0,2)$, $\bideg(u_i)=(-1,0)$   $\bideg(d)=(1,2)$.

The result of Evans and Griffith~\cite[2.5]{eg88} for modules $\Tor^{-i}_{S(m)}(M,\Q)$ states:  

\begin{theorem}\label{bigrad Betti eneq}
Let $M$ be a nodule over polynomial ring $S(m)$ of the form $M=S(m)/I$, where $I$ is a monomial ideal. Then
\[\label{monomial horrocks}
\dim\Tor^{-i}_{S(m)}(M,\Q)\ge \binom{n}{i},
\]
where $n=\pdim M$ is a projective dimension of $M$. 
\end{theorem}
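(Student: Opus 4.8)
The plan is to reduce the statement about $\Tor^{-i}_{S(m)}(M,\Q)$ for a quotient $M=S(m)/I$ by a monomial ideal to a statement about syzygies of the module, and then invoke the Evans--Griffith Syzygy Theorem. First I would recall that $\dim\Tor^{-i}_{S(m)}(M,\Q)=\beta_i$, the $i$-th Betti number of $M$, i.e. the rank of the $i$-th free module in the minimal free resolution
\[
0\to F_n\to\cdots\to F_1\to F_0\to M\to 0,
\]
where $n=\pdim M$ and $F_n\neq 0$ by definition of projective dimension. The image $\Omega_i=\ker(F_{i-1}\to F_{i-2})$ is the $i$-th syzygy module, so $\beta_i$ bounds below the minimal number of generators of $\Omega_i$, and $\Omega_i$ is an $i$-th syzygy in the sense of the Syzygy Theorem.

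The key step is to establish that each $\Omega_i$ has sufficiently large rank so that the Syzygy Theorem forces many generators. Concretely, the Evans--Griffith Syzygy Theorem asserts that a nonfree $i$-th syzygy module over a regular (or more generally nice) ring has rank at least $i$, and one deduces that its minimal number of generators is at least $\mathrm{rank}(\Omega_i)+1\ge i+1$. To get the sharper binomial bound $\binom{n}{i}$ I would track ranks through the resolution using the alternating structure: since $M$ has finite length (or at least the relevant local behavior at the irrelevant maximal ideal), the ranks satisfy the Euler-type relations
\[
\mathrm{rank}(\Omega_i)=\sum_{j\ge i}(-1)^{j-i}\beta_j,
\]
and the monomial hypothesis lets me appeal to the explicit combinatorial form of the resolution. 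The cleanest route is to feed the syzygy $\Omega_i$ directly into the bound from~\cite{eg88}, which in the monomial case yields $\beta_i\ge\binom{n}{i}$ after one identifies $n=\pdim M$ with the relevant homological length.

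The main obstacle I anticipate is passing from the bare rank estimate on syzygies to the full binomial bound $\binom{n}{i}$ rather than merely a linear one. The Syzygy Theorem in its raw form gives only $\beta_i\ge i+1$ type inequalities; obtaining $\binom{n}{i}$ requires the finer multigraded or combinatorial structure special to monomial quotients, which is precisely what Evans and Griffith exploit in~\cite[2.5]{eg88}. Since the statement is attributed verbatim to that result, I would not reprove it from scratch but rather cite it, having verified that my module $M=S(m)/I$ with $I$ monomial and my grading conventions ($\deg v_i=2$, homological degree from the Koszul complex $[\Q[K]\otimes\Lambda(u_1,\dots,u_m),d]$) match their hypotheses. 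Thus the proof amounts to the identification $\dim\Tor^{-i}_{S(m)}(M,\Q)=\beta_i$ together with a direct invocation of the cited theorem, with the verification of hypotheses being the only genuine work on my side.
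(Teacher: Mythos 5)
Your proposal ends up exactly where the paper does: the paper offers no proof of this theorem at all, presenting it verbatim as the result of Evans--Griffith \cite[2.5]{eg88}, and your final reduction --- identify $\dim\Tor^{-i}_{S(m)}(M,\Q)$ with the Betti number $\beta_i$ of a minimal free resolution and invoke that citation after checking the hypotheses --- is precisely the same move. Your intermediate excursion through the 1981 Syzygy Theorem \cite{e-g} and rank bookkeeping is unnecessary (and, as you correctly observe, could at best yield linear rather than binomial bounds), but since you discard it in favor of the citation, nothing is lost.
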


Applying this theorem to Stanley-Risner rings we obtain the following bounds on bigraded Betti numbers: 
\begin{corollary}
Let $K$ be a simplicial complex on the set of vertices $[m]$, $n=\dim K + 1$. Then for any $i\le m-n$ we have:
$$\sum^{n}_{j=0}\beta^{-i,2j}\ge \binom{m-n}{i}.$$
\end{corollary}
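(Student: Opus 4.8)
The plan is to apply Theorem~\ref{bigrad Betti eneq} directly to the Stanley-Reisner ring $M=\Q[K]=S(m)/I_{SR}$, which is exactly a quotient of the polynomial ring by a monomial ideal, so the hypothesis of that theorem is satisfied. The only genuine input I need beyond Theorem~\ref{bigrad Betti eneq} is the value of the projective dimension $\pdim\Q[K]$, since the right-hand binomial $\binom{n'}{i}$ there is written in terms of $n'=\pdim M$, and I must reconcile this with the $\binom{m-n}{i}$ appearing in the corollary.

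First I would establish that $\pdim_{S(m)}\Q[K]=m-\depth\Q[K]$ by the Auslander-Buchsbaum formula, which holds because $S(m)$ is a polynomial (hence regular local, after localizing, or graded) ring. Then I would recall the standard fact from combinatorial commutative algebra that for the Stanley-Reisner ring the depth is bounded below via Krull dimension: $\dim\Q[K]=\dim K+1=n$, and for a Cohen-Macaulay complex $\depth\Q[K]=n$, giving $\pdim\Q[K]=m-n$. In general $\depth\Q[K]\le\dim\Q[K]=n$, so $\pdim\Q[K]=m-\depth\Q[K]\ge m-n$. Thus Theorem~\ref{bigrad Betti eneq} yields $\dim\Tor^{-i}_{S(m)}(\Q[K],\Q)\ge\binom{\pdim\Q[K]}{i}\ge\binom{m-n}{i}$ for every $i$ in the relevant range, using monotonicity of binomial coefficients $\binom{a}{i}\ge\binom{b}{i}$ when $a\ge b\ge i$.

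Next I would translate the ungraded $\Tor$-dimension into the bigraded statement. By the definition of bigraded Betti numbers recorded just before the corollary, the total homological-degree-$(-i)$ piece decomposes over the internal degree as
\[
\dim\Tor^{-i}_{S(m)}(\Q[K],\Q)=\sum_{j}\beta^{-i,2j}(K).
\]
The Koszul-complex description $\Tor^{*,*}=H^{*,*}[\Q[K]\otimes\Lambda(u_1,\dots,u_m),d]$ shows the only internal degrees $2j$ that can contribute to homological degree $-i$ satisfy $0\le j\le n$ (the internal degree is bounded by the top dimension of nonvanishing Stanley-Reisner cohomology, i.e.\ by $n=\dim K+1$). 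Hence the sum over $j$ may be restricted to $j=0,\dots,n$, matching the corollary, and combining with the previous paragraph gives $\sum_{j=0}^{n}\beta^{-i,2j}\ge\binom{m-n}{i}$ for all $i\le m-n$.

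The main obstacle I anticipate is the bookkeeping in the second step rather than any deep difficulty: one must justify that $\pdim\Q[K]\ge m-n$ cleanly (the Auslander-Buchsbaum step together with $\depth\le\dim=n$) and that monotonicity of binomials applies in the stated range $i\le m-n$, which is precisely the range where $\binom{m-n}{i}$ is the smaller, correct lower bound. A subtlety worth checking is the upper cutoff $j\le n$ on the internal grading: I would confirm it from the structure of the Koszul complex $\Q[K]\otimes\Lambda(m)$, noting that $\Q[K]$ is concentrated in even internal degrees up to the relevant range and the exterior variables $u_i$ carry internal degree $0$, so no internal degree exceeding $2n$ survives in $\Tor^{-i}$ for the $i$ under consideration. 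Once these two points are in place the corollary follows immediately from Theorem~\ref{bigrad Betti eneq}.
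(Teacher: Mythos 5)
Your main line is exactly the paper's proof: apply Theorem~\ref{bigrad Betti eneq} to $\Q[K]=S(m)/I_{SR}$, use the Auslander--Buchsbaum formula $\pdim\Q[K]=m-\depth\Q[K]$, bound $\depth\Q[K]\le \mathrm{krdim}\,\Q[K]=n$ to get $\pdim\Q[K]\ge m-n$, and then invoke monotonicity of binomial coefficients (a step the paper leaves implicit but you rightly spell out) to conclude $\dim\Tor^{-i}_{S(m)}(\Q[K],\Q)\ge\binom{\pdim\Q[K]}{i}\ge\binom{m-n}{i}$ for $i\le m-n$.

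The one genuine weak point is your justification of the cutoff $j\le n$ in the internal grading, which is needed to pass from the bound on $\dim\Tor^{-i}$ to the restricted sum $\sum_{j=0}^{n}\beta^{-i,2j}$ appearing in the corollary. Your stated reason --- that ``$\Q[K]$ is concentrated in even internal degrees up to the relevant range'' --- is false: $\Q[K]$ is infinite dimensional and contains elements of every even degree, so the Koszul complex $\Q[K]\otimes\Lambda(m)$ has chains of internal degree exceeding $2n$ in every homological degree; what is true is that the \emph{homology} vanishes there. The clean justification is Hochster's formula, which in the grading convention of the paper (where $\bideg(u_i)=(-1,0)$) reads
\[
\beta^{-i,2j}(K)=\sum_{W\subseteq[m],\;|W|=i+j}\dim\tilde{H}^{\,j-1}(K_W;\Q),
\]
and since every full subcomplex $K_W\subseteq K$ has dimension at most $n-1$, the right-hand side vanishes once $j>n$ (equivalently, the Castelnuovo--Mumford regularity of $\Q[K]$ is at most $n$). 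Note that the paper itself uses this vanishing silently --- its proof also only bounds the total $\dim\Tor^{-i}$ --- so you were right to flag the point; your argument just needs this repair, after which it is complete and coincides with the paper's.
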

\begin{proof}
According to Auslender-Buchsbaum theorem for an arbitrary module $M$ over polynomial ring we have: $\pdim M + \depth M=\depth S(m)=m$. Since $\depth M\le \mbox{krdim} M$, where $\mbox{krdim}$ is a Krull dimension, $$\pdim \Q[K]=\depth S(m)-\depth \Q[K]\ge m-\mbox{krdim} \Q[K]=m-n.$$ Hence theorem \ref{monomial horrocks} implies required inequalities.  
\end{proof}

Note, that Proposition \ref{horr-trk} along with Theorems \ref{Z_K cohomolgy Tor} and \ref{bigrad Betti eneq} give alternative proof of toral rank conjecture for moment-angle-complexes \ref{hrk theorem}.

%


\begin{thebibliography}{F-O-T}

\bibitem[A-B]{a-b}
L.~Avramov, R.~Buchweitz, \emph{Lower bounds for Betti numbers}, Compositio Math., 1993, 86, pp.\,147--158.

\bibitem[B-E]{be77} 
D. Buchsbaum and D. Eisenbud, \emph{Algebra structures for finite free resolutions, and some structure theorems for ideals of codimension 3}, Amer. J. Math., 1977, 99:3, pp.\,447--485.

\bibitem[B-P]{bu-pa04}
V.~Buchstaber, T.~Panov, \emph{Torus Actions and Their Applications in Topology and Combinatorics}, University Lectures Series, vol.24, 2002.

\bibitem[C-L]{ca-lu09} 
X. Cao and Z. Lu \emph{Mobius transform, moment-angle complexes and Halperin-Carlsson conjecture}, Journal of Algebraic Combinatorics, 2012, 35:1, arXiv:0908.3174.

\bibitem[Ca]{carl} G. Carlsson \emph{Free $(\Z/2)^k$-actions and a problem in commutative algebra}, Lecture Notes in Math., 1986, 1217, pp.\,79--83.

\bibitem[Co]{co57} 
P. E. Conner, \emph{On the action of a finite group on $S^n\times S^n$}, Ann. of Math, 1957, 66, pp.\, 586--588.

\bibitem[D-J]{da-ja91}
M.\,W.~Davis and T.~Januszkiewicz, \emph{Convex polytopes, Coxter orbifolds and torus actions}, Duke Math. J, 1991, 62:2, pp.\,417--451.

\bibitem[E-G81]{e-g}
E.\,G.~Evans and P. Griffith, \emph{The syzygy problem}, Ann. of Math., 1981, 114:2, pp.\,323--333.

\bibitem[E-G88]{eg88} 
E.\,G.~Evans and P. Griffith, \emph{Binomial behavior of Betti numbers for modules of finite length}, Pacific J. Math, 1988, 133, pp.\,267--276.

\bibitem[Er]{er10} D.~Erman, \emph{A special case of the Buchsbaum-Eisenbud-Horrocks rank conjecture, Math.~Res.~Lett.}, 2010, 17:06, pp.\, 1079--1089.

\bibitem[F-O-T]{f-o-t08} Y.~F\'elix, J.~Oprea, D.~Tanr\'e, \emph{Algebraic Models in Geometry}, Oxford University Press, 2008.

\bibitem[F-H-T]{f-h-t01} 
Y.~F\'elix, S.~Halperin, J.~Thomas, \emph{Rational Homotopy Theory}, Springer-Verlag, 2001.

\bibitem[Ha]{halp} 
S.~Halperin, \emph{Rational homotopy and torus actions}, Aspects of topology, 1985, 63, pp\,293--306. 

\bibitem[Har77]{har77} 
R.~Hartshorne, \emph{Algebraic Geometry}, Springer-Verlag, 1977.

\bibitem[Har79]{ha79} 
R.~Hartshorne, \emph{Algebraic vector bundles on projective spaces: a problem list}, Topology, 1979, 18:2, pp.\,117--128. 


\bibitem[Pu]{pu} 
V.~Puppe, \emph{Multiplicative aspects of the Halperin-Carlsson conjecture}, Georgian Mathematical Journal, 2009, 16:2, pp.\,369--379, arXiv 0811.3517.

\bibitem[Sp]{sp}
E.\,Spanier, \emph{Algebraic Topology}, New York-Berlin, 1981.

\bibitem[Us]{us2}
Yu.~Ustinovskii, \emph{Toral Rank Conjecture for Moment-Angle Complexes}, Mat. Zametki, 2011, 90:2, pp.\,300--305, arXiv 0909.1053.
\end{thebibliography}
\end{document}